\renewcommand{\epsilon}{\varepsilon}
\newcommand{\newsection}[1]
{\subsection{#1}\setcounter{theorem}{0} \setcounter{equation}{0}
\par\noindent}
\newtheorem{theorem}{Theorem}
\newtheorem{lemma}[theorem]{Lemma}
\newtheorem{corr}[theorem]{Corollary}
\newtheorem{proposition}[theorem]{Proposition}
\newtheorem{deff}[theorem]{Definition}
\newcommand{\bth}{\begin{theorem}}
\newcommand{\ble}{\begin{lemma}}
\newcommand{\bcor}{\begin{corr}}
\newcommand{\bdeff}{\begin{deff}}
\newcommand{\bprop}{\begin{proposition}}
\newcommand{\ele}{\end{lemma}}
\newcommand{\ecor}{\end{corr}}
\newcommand{\edeff}{\end{deff}}
\newcommand{\eprop}{\end{proposition}}
\newcommand{\cd}{\, \cdot\, }
\newcommand{\la}{\lambda}
\newcommand{\e}{\varepsilon}
\renewcommand{\Pi}{\varPi}
\renewcommand{\epsilon}{\varepsilon}
\newcommand{\R}{{\mathbb R}}
\newcommand{\1}{{\rm 1\hspace*{-0.4ex}%
\rule{0.1ex}{1.52ex}\hspace*{0.2ex}}}
\thanks{The author was supported in part by the NSF grant DMS-1361476.}
\begin{document}

\title[Improved critical eigenfunction estimates]
{Improved critical
eigenfunction   estimates on manifolds of nonpositive curvature}
%\thanks{The authors were supported in part by the NSF}
%
%
%
%
%
\begin{abstract}  We prove new improved endpoint, $L^{p_c}$, $p_c=\tfrac{2(n+1)}{n-1}$, estimates (the ``kink point'') for eigenfunctions 
on manifolds of
nonpositive curvature.  We do this by using energy and
dispersive estimates for the wave equation as well as
new improved $L^p$, $2<p< p_c$, 
bounds
of  Blair and the author \cite{BSTop}, \cite{BSK15}
and the classical improved sup-norm estimates of
B\'erard~\cite{Berard}.  Our proof  uses Bourgain's
\cite{BKak} proof of weak-type estimates for the Stein-Tomas
Fourier restriction theorem \cite{Tomas}--\cite{Tomas2} as a template
to be able to obtain improved weak-type $L^{p_c}$ estimates under this geometric
assumption.  We can then use these estimates and the (local)  improved Lorentz space estimates
of Bak and Seeger~\cite{BakSeeg} (valid for all manifolds) to obtain our improved
estimates for the critical space under the assumption of nonpositive sectional curvatures.
%We also discuss natural open problems that are related to our approach which could lead
%to further improvement.
\end{abstract}

\author{Christopher D. Sogge}
\address{Department of Mathematics,  Johns Hopkins University,
Baltimore, MD 21218}
\email{sogge@jhu.edu}

\maketitle

\newsection{Introduction}

Let $(M,g)$ be a compact $n$-dimensional Riemannian manifold and let $\Delta_g$ be the associated
Laplace-Beltrami operator.  We shall consider $L^2$-normalized eigenfunctions of frequency $\la$, i.e.,
$$-\Delta_g e_\la=\la^2 e_\la, \quad \int_M |e_\la|^2 \, dV_g=1,$$
with $dV_g$ denoting the volume element.

The author showed in \cite{Sef} that one has the following bounds for a given $2<p\le \infty$ and
$\la\ge1$:
\begin{equation}\label{1.1}
\|e_\la\|_{L^p(M)}\le C\la^{\mu(p)}, \quad
\mu(p)=\max\bigl(\tfrac{n-1}2(\tfrac12-\tfrac1p), \, n(\tfrac12-\tfrac1p)-\tfrac12\bigr).
\end{equation}
These estimates are saturated on the round sphere by zonal functions, $Z_\la$, for $p\ge \tfrac{2(n+1)}{n-1}=p_c$ and
for $2<p\le p_c$ by the highest weight spherical harmonics $Q_\la =\la^{\frac{n-1}4}(x_1+ix_2)^k$, if 
$\la=\la_k=\sqrt{(k+n-1)k}$.  See \cite{Sthesis}.  The zonal functions have the maximal concentration at points allowed
by the sharp Weyl formula, while the highest weight spherical harmonics have the maximal concentration near periodic
geodesics that is allowed by \eqref{1.1}.

Over the years there has been considerable work devoted to determining when \eqref{1.1} can be improved.  Although
not explicitly stated, this started in the work of B\'erard~\cite{Berard}, which implies that for manifolds of nonpositive
curvature the estimate for $p=\infty$ can be improved by a $(\log \la)^{-\frac12}$ factor (see \cite[Proposition 3.6.2]{Hang}).
By interpolation with the special case of $p=p_c$ in \eqref{1.1}, one obtains improvement for all exponents
$p_c<p\le \infty$, which was further recently improved by Hassell and Tacy~\cite{HassellTacy}.  The author and Zelditch~\cite{SZDuke}
showed that for generic manifolds one can obtain $o(\la^{\mu(p)})$ bounds for $\|e_\la\|_{L^p}$ if $p_c<p\le \infty$.  These
results were improved in \cite{STZ} and in \cite{SZRev} and \cite{SZRev2}.  In the latter two articles, a necessary and sufficient
condition in the real analytic setting was obtained for such bounds for exponents larger than the critical one, $p_c$.

The estimate for the complementary range of $2<p<p_c$ has also garnered much attention of late.  In works of
Bourgain~\cite{Bourgainef} and the author~\cite{SKN} for $n=2$, it was shown that improvements of \eqref{1.1} for this
range is equivalent to improvements of the geodesic restriction estimates of Burq, G\'erard and Tzvetkov~\cite{BGT}, as well as natural
Kakeya-Nikodym bounds introduced in \cite{SKN} measuring $L^2$-concentration of eigenfunctions on $\la^{-\frac12}$ tubes about
unit-length geodesics.  This is all very natural in view of the properties of the highest weight spherical harmonics
(see \cite{SKN} and \cite{SCon} for further discussion).  Using this equivalence and improved geodesic restriction estimates,
the author and Zelditch showed in \cite{SZStein} that $\|e_\la\|_{L^p}=o(\la^{\mu(p)})$ for $2<p<p_c$ if $n=2$ under the assumption
of nonpositive curvature, and similar improved bounds in higher dimensions and the equivalence of this problem and
improved Kakeya-Nikodym estimates were obtained by Blair and the author in \cite{BSJ}.  Very recently, in \cite{BSTop}
and \cite{BSK15}, we were able to obtain logarithmic improvements for this range of exponents in all dimensions under the
assumption of nonpositive curvature using 
microlocal analysis and the
classical Toponogov triangle comparison theorem in Riemannian geometry.
In addition to relationships with geodesic concentration and quantum ergodicity, improvements of \eqref{1.1} for
$2<p\le p_c$ are of interest because of their connection with nodal problems for eigenfunctions
(see, e.g., \cite{BSJ}, \cite{BSTop}, \cite{CM}, \cite{HezR}, \cite{HezS}, \cite{SZNod1} and \cite{SZNod2}).

Despite the success in obtaining improvements of \eqref{1.1} for the ranges $2<p<p_c$ and
$p_c<p\le \infty$, improvements for the critical space where $p=p_c=\tfrac{2(n+1)}{n-1}$ have proven to be
elusive.  The special case of \eqref{1.1} for this exponent reads as follows:
\begin{equation}\label{1.1'}\tag{1.1$'$}
\|e_\la\|_{L^{\frac{2(n+1)}{n-1}}(M)}\le C\la^{\frac{n-1}{2(n+1)}},
\end{equation}
and by interpolating with the trivial $L^2$ estimate and the sup-norm estimate
$\|e_\la\|_{L^\infty}=O(\la^{\frac{n-1}2})$, which is implicit in Avakumovi\'c~\cite{Av}
and Levitan~\cite{Lev}, one obtains all of the other bounds in \eqref{1.1}.

Improving \eqref{1.1'} has been challenging in part because it detects both point concentration and
concentration along periodic geodesics (as we mentioned for the sphere).  The techniques developed
for improving \eqref{1.1} for $p>p_c$ focused on the former and the more recent ones for
$2<p<p_c$ focused on the latter.  To date the only improvements of \eqref{1.1'} are recent
ones of Hezari and Rivi\`ere \cite{HezR} who used small-scale variants of the classical
quantum ergodic results of Colin de Verdi\`ere~\cite{CdVQE}, Snirelman~\cite{SnQE} and 
Zelditch~\cite{ZQE} (see also \cite{Zrate}) to show that for manifolds of strictly negative sectional
curvature there is a density one sequence of eigenfunctions for which \eqref{1.1'} can be 
logarithmically improved.  The $L^2$-improvements for small balls that were used had been
obtained independently by Han~\cite{Han} earlier, and, in
a companion article \cite{SHR} to \cite{HezR},
% the appendix \cite{SHR} 
%to \cite{HezR}
the author showed that, under the weaker assumption of ergodic geodesic flow, one can improve \eqref{1.1'}
for a density one sequence of eigenfunctions.

Our main result here is that, under the assumption of nonpositive curvature, one can obtain improved $L^{p_c}$
estimates for {\em all} eigenfunctions:

%\begin{theorem}\label{thm1.1}
%If $\Omega\subset M$ is measurable, let $|\Omega|$ denote its $dV_g$ measure.  Then there is a constant
%$C$, depending only on our $n$-dimensional compact Riemannian manifold $(M,g)$, so that for $\la\gg 1$ we have
%\begin{equation}\label{main2'}\tag{1.1$''$}
%%\omega_\la(\alpha)=
%\Bigl|\bigl\{x\in M: \, |e_\la(x)|>\alpha\bigr\}\Bigr|
%\le C\la \bigl(\log \log \la\bigr)^{-\frac2{n-1}} \alpha^{-\frac{2(n+1)}{n-1}} , \quad \alpha>0.
%\end{equation}
%\end{theorem}

\begin{theorem}\label{thm1.1}
Assume that $(M,g)$ is of nonpositive curvature.  Then there is a constant $C=C(M,g)$ so that for $\la \gg 1$
\begin{equation}\label{main}  
\|e_\la\|_{L^{\frac{2(n+1)}{n-1}}(M)}\le C\la^{\frac{n-1}{2(n+1)}} \bigl(\log \log \la\bigr)^{-\frac2{(n+1)^2}}.
\end{equation}
Additionally, 
\begin{equation}\label{main2}
\bigl\|\chi_{[\la,\la+(\log\la)^{-1}]} f\bigr\|_{L^{\frac{2(n+1)}{n-1}}(M)} \le C
\la^{\frac{n-1}{2(n+1)}}\bigl(\log \log \la\bigr)^{-\frac2{(n+1)^2}} \|f\|_{L^2(M)}.
\end{equation}
\end{theorem}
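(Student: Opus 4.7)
The abstract indicates the two-step plan: first establish an improved weak-type $L^{p_c}$ bound for the spectral window projector $\chi_\la:=\chi_{[\la,\la+(\log\la)^{-1}]}$, then convert the weak-type bound to a strong-type bound via the Lorentz-space refinement of Bak-Seeger~\cite{BakSeeg}, which is valid on every compact Riemannian manifold. Concretely, the aim is to prove
$$\|\chi_\la f\|_{L^{p_c,\infty}(M)}\le C\la^{\frac{n-1}{2(n+1)}}(\log\log\la)^{-\beta}\|f\|_{L^2(M)}$$
for a suitable $\beta>0$, and then combine this with the Bak-Seeger bound via real-interpolation-style bookkeeping to obtain \eqref{main2}. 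The eigenfunction bound \eqref{main} follows by applying \eqref{main2} to $f=e_\la$.

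For the weak-type bound I would start from the standard wave-equation representation
$$\chi_\la f \;\sim\; \int \widehat\psi(t/T)\, e^{-it\la}\bigl(e^{it\sqrt{-\Delta_g}}f\bigr)\,dt,\qquad T=c\log\la,$$
with $\psi\in\Coi(\R)$ chosen so that $\widehat\psi\equiv 1$ near the origin, i.e. spectral projection onto a $T^{-1}$-window smoothed at the Ehrenfest scale. The short-time contribution $|t|\lesssim 1$ yields the unconditional bound \eqref{1.1'} via the Hadamard parametrix. The long-time contribution $1\ll|t|\le T$ is lifted to the universal cover (where, by Cartan-Hadamard, the exponential map is a diffeomorphism under nonpositive curvature) and analyzed by B\'erard's parametrix~\cite{Berard}; this yields B\'erard's $(\log\la)^{-1/2}$-improved $L^2\to L^\infty$ bound together with the analogous logarithmic gains at every $L^q$, $2<q<p_c$, supplied by the recent Blair-Sogge work \cite{BSTop}, \cite{BSK15}.

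Straightforward interpolation against \eqref{1.1'} destroys these off-endpoint gains at $p_c$ itself, so the key is to mount Bourgain's weak-type bootstrap from \cite{BKak} to recover a gain at the endpoint. For a set $E\subset M$ and $f$ with $\|f\|_{L^2}=1$, I would decompose $\chi_\la f$ dyadically in amplitude and estimate $\langle \chi_\la f, \mathbf{1}_E\rangle$ by pairing each dyadic shell against one of the two available off-endpoint strong bounds --- at some $q_-<p_c$ (Blair-Sogge) and some $q_+>p_c$ (B\'erard interpolated with \eqref{1.1'}) --- then optimize the break exponent in terms of $|E|$. The optimization converts a $(\log\la)^{-\gamma}$ off-endpoint saving into a $(\log\log\la)^{-\beta}$ saving at $p_c$, and arithmetically tracking exponents through the balance produces the explicit value $\beta=\tfrac{2}{(n+1)^2}$. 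The main obstacle I expect is implementing Bourgain's balance in this geometric setting, where the Ehrenfest-scale wave kernel on the universal cover plays the role of the Euclidean extension kernel: one has to verify that the improved bounds on each side of $p_c$ apply uniformly on the dyadic amplitude shells of $\chi_\la f$, so that no logarithmic savings leak out upon resummation. Once that is in place, the Bak-Seeger upgrade is essentially bookkeeping.
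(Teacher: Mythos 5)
Your overall architecture is the paper's: an improved weak-type $L^{p_c,\infty}$ bound for $\chi_{[\la,\la+(\log\la)^{-1}]}$, upgraded to the strong bound \eqref{main2} by splitting the Lorentz quasi-norm against the Bak--Seeger $L^2\to L^{p_c,2}$ estimate (that last step really is just bookkeeping, as you say). You have also correctly identified the two dangerous heights $\alpha\approx\la^{(n-1)/4}$ and $\alpha\approx\la^{(n-1)/2}$ and the three inputs (B\'erard, Blair--Sogge, Bourgain's weak-type Stein--Tomas argument). But the mechanism you describe for the weak-type estimate does not work, and it is not Bourgain's mechanism.

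The gap: ``decompose $\chi_\la f$ dyadically in amplitude and pair each shell against one of the two off-endpoint strong bounds at $q_-<p_c$ and $q_+>p_c$, then optimize the break'' is, after the optimization, exactly the multiplicative interpolation $\sup_\alpha \alpha^{p_c}\min\bigl(\alpha^{-q_-}B_-^{q_-},\,\alpha^{-q_+}B_+^{q_+}\bigr)$ of the two Chebyshev bounds. Because $q\mapsto q\,\mu(q)$ has a concave kink at $p_c$ (the two linear pieces have the same slope $\tfrac{n-1}{2}$ but intercepts $-\tfrac{n-1}{2}$ and $-n$), this produces $\la^{1+\theta\frac{n+1}{2}}\alpha^{-p_c}$ with $\theta>0$: you lose a \emph{power} of $\la$, not just the logarithms. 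Concretely, in the intermediate range $\la^{(n-1)/4+\epsilon}\lesssim\alpha\lesssim\la^{(n-1)/2-\epsilon}$ neither Chebyshev application even recovers the trivial bound \eqref{1.2}, so no choice of break point can help. What Bourgain's argument (and the paper) actually does there is a \emph{spatial} decomposition of the superlevel set $A=\{|T_\la h|>\alpha\}$ into cubes of sidelength $r=\la\alpha^{-4/(n-1)}(\log\log\la)^{-2/(n-1)}$, pairwise separated by $C_0r$, followed by $TT^*$: the diagonal is controlled by the ball-localized energy estimate \eqref{2.5} (the $r^{1/2}$ gain on $B(x,r)$), and the off-diagonal by the pointwise kernel bound \eqref{3.5} for $\eta(T(\la-P))$, whose $\la^{\frac{n-1}{2}}e^{CT}$ error term forces the window $T=c_0\log\log\la$ and is the sole source of the $\log\log$ gain. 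The off-endpoint strong bounds enter only at the two extremes: Blair--Sogge disposes of $\alpha\le\la^{(n-1)/4}(\log\la)^{\delta_n}$ by Chebyshev, and B\'erard's sup-norm gain shows $A=\emptyset$ once $\alpha\gtrsim\la^{(n-1)/2}(\log\log\la)^{-1/2}$ (which guarantees $r\ge\la^{-1}$ so \eqref{2.5} applies). Without the kernel estimate and the spatial separation your argument has no way to treat the intermediate heights, so as written the proof of the weak-type bound, and hence of the theorem, is incomplete.
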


Here
 if $0=\la_0<\la_1\le \la_2\le \cdots$ are the eigenvalues of $\sqrt{-\Delta_g}$ counted with respect to 
multiplicity and if $\{e_j\}$ is an associated orthonormal basis of eigenfunctions, 
if $I\subset [0,\infty)$
$$\chi_If=\sum_{\la_j\in I}E_jf,$$
where
$$E_j f(x)=\bigl(\int_M f \, \overline{e_j}\, dV_g\bigr) \times e_j(x),$$
denotes the projection onto the $j$th eigenspace.  Thus, \eqref{main2} implies \eqref{main}.

By interpolation and an application of a Bernstein inequality, 
this bound implies that for all exponents $p\in (2,\infty]$ one can improve
\eqref{1.1} by a power of $(\log \log\la)^{-1}$.  Although stronger log-improvements are in \cite{Berard},
\cite{BSTop}, \cite{BSK15} and \cite{HassellTacy} for $p\ne p_c$, \eqref{main} represents the first improvement involving all eigenfunctions for the critical exponent.
%Also, of course, as usual, we shall also be able to prove estimates for appropriate quasi-modes, but we postpone
%the exact statements of these bounds for now.
Also, besides the earlier improved geodesic eigenfunction restriction estimates for $n=2$ of Chen and the author~\cite{ChenS}, this result seems to be the first
improvement of estimates that are saturated by both the zonal functions and highest weight spherical harmonics on spheres.

The main step in proving these $L^{p_c}$-bounds will be to show that one has the following related weak-type estimates:

\begin{proposition}\label{prop1.2}
Assume, as above, that $(M,g)$ is a fixed manifold of nonpositive curvature.  Then there is a uniform constant
$C$ so that for $\la \gg 1$ we have
\begin{multline}\label{main2'}\tag{1.3$'$}
\bigl|\bigl\{x\in M: \bigl|\chi_{[\la,\la+(\log\la)^{-1}]}f(x)\bigr|>\alpha\bigr\}\bigr|
\le C\la \bigl(\log \log \la\bigr)^{-\frac2{n-1}}\alpha^{-\frac{2(n+1)}{n-1}},
\\
\alpha>0, \quad \text{if } \, \, \|f\|_{L^2(M)}=1.
\end{multline}
Here $|\Omega|$ denotes the $dV_g$ measure of a subset $\Omega$ of $M$.
\end{proposition}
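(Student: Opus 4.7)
\textit{Proof plan.} The plan is to follow Bourgain's \cite{BKak} proof of the endpoint weak-type Stein--Tomas restriction theorem, with the nonpositive-curvature hypothesis entering through B\'erard's long-time wave parametrix on the universal cover and through the Kakeya--Nikodym formulation of the Blair--Sogge improved $L^p$ bounds \cite{BSTop,BSK15}. Normalize $\|f\|_{L^2}=1$, set $T=T_\lambda=\chi_{[\lambda,\lambda+(\log\lambda)^{-1}]}$, and write $V=|E_\alpha|$ for $E_\alpha=\{x\in M:|Tf(x)|>\alpha\}$. Applied to this spectral window, B\'erard's improvement yields $\|Tf\|_{L^\infty}\le C\lambda^{(n-1)/2}(\log\lambda)^{-1/2}$, so $E_\alpha$ is empty unless $\alpha$ lies below this ceiling, while \eqref{1.1'} unconditionally gives $V\le C\lambda\alpha^{-2(n+1)/(n-1)}$; the task therefore reduces to saving the factor $(\log\log\lambda)^{-2/(n-1)}$ in a range of $\alpha$ close to the saturating value $\lambda^{(n-1)/(2(n+1))}$.

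Dualize via $TT^*$: choose a unit sign-function $\phi$ supported in $E_\alpha$ so that
$$\alpha V\le|\langle Tf,\phi\rangle|\le\|T^*\phi\|_{L^2},\qquad \|T^*\phi\|_{L^2}^2=\langle T\phi,\phi\rangle=\iint K(x,y)\,\phi(y)\,\overline{\phi(x)}\,dV_g(x)\,dV_g(y),$$
where $K(x,y)=\sum_{\lambda_j\in I}e_j(x)\overline{e_j(y)}$ is the kernel of the self-adjoint projection $T=T^2$. Inserting a Fourier-side cutoff whose transform is supported in $|t|\le c\log\lambda$ replaces $K$, up to an acceptable error, by its short-time wave-equation representative, which admits a Hadamard parametrix on the universal cover through Toponogov's comparison theorem, in the style of B\'erard.

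The core of the argument is Bourgain's dichotomy. Split $E_\alpha=E_\alpha^{\mathrm{b}}\cup E_\alpha^{\mathrm{s}}$ according to whether $|Tf|$ concentrates on $\lambda^{-1/2}$-tubes around unit-length geodesics (the ``bushy'' part) or such tubes are well-separated at a chosen density threshold (the ``sparse'' part). For the bushy part, apply the Kakeya--Nikodym equivalence: since Blair--Sogge give $\|Tg\|_{L^p}\le C_p\lambda^{\mu(p)}(\log\lambda)^{-\sigma(p)}\|g\|_{L^2}$ with $\sigma(p)>0$ for any fixed $p\in(2,p_c)$, one obtains a bound on $|E_\alpha^{\mathrm{b}}|$ carrying a $(\log\lambda)^{-c}$ gain. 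For the sparse part, expand the $TT^*$ double integral and combine B\'erard's improved diagonal bound $K(x,x)\le C\lambda^{n-1}(\log\lambda)^{-1}$ with the oscillatory off-diagonal decay of $K(x,y)$ inherited from the universal-cover parametrix to produce a second, $L^2$-type estimate on $|E_\alpha^{\mathrm{s}}|$, also with its own logarithmic gain.

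Choosing the separation scale that balances the two estimates will produce the weak-type bound \eqref{main2'}; the doubly logarithmic factor $(\log\log\lambda)^{-2/(n-1)}$ is then natural, because balancing forces a free parameter of size $\sim 1/\log\lambda$ whose cost manifests as a second logarithm. I expect the main obstacle to be the sparse piece: faithfully executing Bourgain's combinatorial/geometric argument inside the B\'erard parametrix on the universal cover, and verifying that the $(\log\lambda)^{-1/2}$ gain at $p=\infty$ and the $(\log\lambda)^{-\sigma(p)}$ gains for $p<p_c$ combine through the $TT^*$ sum to yield precisely the exponent $2/(n-1)$ on $\log\log\lambda$ at the critical endpoint.
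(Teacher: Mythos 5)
Your proposal correctly identifies the three ingredients (Bourgain's weak-type Stein--Tomas argument, B\'erard's universal-cover parametrix, and the Blair--Sogge subcritical $L^p$ improvements), but the decomposition you propose is not the one the paper uses, and I don't see how to make your version work.

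The paper does \emph{not} split $E_\alpha$ into ``bushy'' and ``sparse'' pieces based on tube concentration. The Kakeya--Nikodym machinery is not invoked as a spatial decomposition of the level set; instead, the dichotomy is on the \emph{size of $\alpha$}. For $\alpha\le\la^{(n-1)/4}(\log\la)^{\delta_n}$, one simply applies the Blair--Sogge bound $\|\rho(\log\la(\la-P))\|_{L^2\to L^{2n/(n-1)}}\lesssim \la^{\mu(2n/(n-1))}(\log\la)^{-\delta_n}$ to $Tf$ itself and uses Chebyshev: the $(\log\la)^{-c\delta_n}$ gain in the $L^{2n/(n-1)}$ norm is more than enough to win against the cost of working with the subcritical exponent, because $\alpha$ is small. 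It is unclear what bounding only a ``bushy piece'' by the full-function $L^p$ estimate would even mean, and the Kakeya--Nikodym equivalence in \cite{SKN}, \cite{BSJ} is a statement about \emph{a priori} $L^p$ bounds, not a decomposition of a single level set into tube-aligned and non-tube-aligned parts.

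For $\alpha\ge\la^{(n-1)/4}(\log\la)^{\delta_n}$, the paper runs the actual Bourgain argument: decompose $A=\{|T_\la h|>\alpha\}=\bigcup A_j$ into $C_0r$-separated cubes of sidelength $r=\la\alpha^{-4/(n-1)}(\log\log\la)^{-2/(n-1)}$, write $\alpha^2|A|^2\le I+II$ by Cauchy--Schwarz, and bound the diagonal term $I$ by the \emph{local ball $L^2$ estimate} $\|\rho(T(\la-P))f\|_{L^2(B(x,r))}\lesssim r^{1/2}\|f\|_{L^2}$ (Lemma~\ref{lemma2.2} and \eqref{3.13}). Your proposal never mentions this local $L^2$ estimate, which is the crucial device that produces the factor $r|A|$ rather than the cruder $\lambda^{n-1}(\log\lambda)^{-1}|A|$ from a pointwise diagonal kernel bound; without it the numerology does not close.

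Finally, the source of the double logarithm is not the ``balancing of a free parameter $\sim 1/\log\lambda$.'' It comes from the kernel bound \eqref{3.5}, which carries an error term $\la^{(n-1)/2}\exp(CT)$. If you try to run the off-diagonal estimate with a Fourier cutoff supported in $|t|\le c\log\la$, as you propose, the error term is $\la^{(n-1)/2+Cc}$, which is not acceptable. The paper is therefore forced to use a \emph{wider} spectral window, replacing $\rho(\log\la(\la-P))$ by $\rho(c_0\log\log\la(\la-P))$ (with an easy commutation argument to pass between the two), so that the error term becomes $\la^{(n-1)/2}(\log\la)^{\delta_n/10}$, which is harmless once $\alpha\ge\la^{(n-1)/4}(\log\la)^{\delta_n}$. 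Your proposal, as written, does not anticipate this window change, and that is where the $\log\log$ actually enters.
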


Note that, by Chebyshev's inequality \eqref{main2} implies an inequality of the type \eqref{main2'}, but
with a less favorable exponent for the $\log \log \la$ factor.  The inequality says
that $\chi_{[\la,\la+(\log\la)^{-1}]}$ sends $L^2(M)$ into $L^{p_c,\infty}(M)$, i.e., weak-$L^{p_c}$,
with norm satisfying
\begin{equation}\label{main2''}\tag{1.3$''$}
\| \chi_{[\la,\la+(\log\la)^{-1}]}\|_{L^2(M)\to L^{\frac{2(n+1)}{n-1},\infty}(M)}=O\bigl(\la^{\frac{n-1}{2(n+1)}}/(\log\log \la)^{\frac1{n+1}}).
\end{equation}

After we obtain this weak-type $L^{p_c}$ estimate, we shall be able to obtain \eqref{main2} by, in effect,
interpolating it with another improved $L^{p_c}$ estimate of Bak and Seeger~\cite{BakSeeg}, which says
that the operators $\chi_{[\la,\la+1]}$ map $L^2(M)$ into the Lorentz space $L^{p_c,2}(M)$ 
(see \S 4 for definitions)
with norm
$O(\la^{\frac{n-1}{2(n+1)}})$.  This ``local'' estimate holds for all manifolds---no curvature assumption is needed.

Before turning to the proofs, let us point out that
the weak-type bound \eqref{main2'} cannot hold for $S^n$.  There there are two special values of $\alpha$
that cause problems.  The zonal functions are sensitive to $\alpha\approx \la^{\frac{n-1}2}$, and
$$|\{x\in S^n: \, |Z_\la(x)|>\alpha\}| \approx \la^{-n}\approx \la \alpha^{-\frac{2(n+1)}{n-1}},
\quad \text{if } \, \, \alpha =c\la^{\frac{n-1}2},$$
with $c>0$ fixed sufficiently small.
Similarly, the highest weight spherical harmonics, $Q_\la$, are sensitive to $\alpha \approx \la^{\frac{n-1}4}$ in that
$$|\{x\in S^n: \, |Q_\la(x)|>\alpha\}| \approx \la^{-\frac{n-1}2} \approx \la \alpha^{-\frac{2(n+1)}{n-1}}, 
\quad \text{if } \, \, \alpha =c\la^{\frac{n-1}4},$$
and $c>0$ fixed sufficiently small.   Note that by \eqref{1.1'} and Chebyshev's inequality, we always have, 
on any $(M,g)$, 
\begin{equation}\label{1.2}
|\{x\in M: \, |e_\la(x)|>\alpha\}|\lesssim \la \alpha^{-\frac{2(n+1)}{n-1}},
\end{equation}
and so the zonal functions and the highest weight spherical harmonics saturate this weak-type estimate.  We shall give
a simple proof of \eqref{1.2} in the next section that will serve as a model for the proof of the improved weak-type bounds in 
Proposition~\ref{prop1.2}.  It is based on a modification of Bourgain's~\cite{BKak} proof of a weak-type version of the 
critical Fourier restriction estimate of Stein and Tomas~\cite{Tomas}--\cite{Tomas2}.

%Arguments of Bak and Seeger~\cite{BakSeeg} show that \eqref{main2'} can actually be used to prove \eqref{1.1'} as well as a stronger
%variant involving Lorentz spaces.  We do not seem to be able to adapt the proof of their arguments to get any of the strong $L^{\frac{2(n+1)}{n-1}}$
%variants of \eqref{main2'} since we can not obtain the pointwise bounds for the smoothed out spectral projection kernels that
%are required.  Lower bounds for these types kernels in Jakobson and Polterovich~\cite{JP0}--\cite{JP} suggest that this approach
%may not work.  On the other hand, seemingly modest types of improvements of $L^2(B(x,r))$ norms for small geodesic balls of
%radius $r$ about points $x\in M$, which are considerably weaker than the ones obtained for quantum ergodic sequences
%in \cite{Han} and \cite{HezR},  could lead to improvements.  We shall go into greater detail about these issues and also state
%natural questions regarding manifolds with boundary at the end of our article.

Let us %conclude this section by indicating 
give an overview of 
why  are able to obtain \eqref{main2'} and \eqref{main2}.  As we mentioned before,
the potentially dangerous values of $\alpha$ for the former are $\alpha\approx \la^{\frac{n-1}2}$ and $\alpha\approx \la^{\frac{n-1}4}$.
The aforementioned sup-norm estimates of B\'erard~\cite{Berard} provide log-improvements over
\eqref{1.2} for $\alpha \ge \la^{\frac{n-1}2}/(\log \la)^{\frac12}$, while the recent log-improved $L^p$ estimates,
$2<p<p_c$, of Blair and the author \cite{BSTop}, \cite{BSK15} yield log-improvements for $\alpha$ near the other dangerous
value $\la^{\frac{n-1}4}$.   Specifically, we are able to obtain improvements
when $\alpha \le \la^{\frac{n-1}4}(\log \la)^{\delta_n}$ for some
$\delta_n>0$.
  We can cut and paste these improvements into 
  %a variant of a robust 
  the aforementioned
  argument of 
Bourgain~\cite{BKak} to obtain \eqref{main2'}.
We then can upgrade the weak-type estimates that we obtain (at the expense of less favorable powers of 
$(\log \log \la)^{-1}$) to a standard $L^{p_c}$ estimate using the result of Bak and Seeger~\cite{BakSeeg}.
Thus, we combine the earlier ``global" results of \cite{Berard}, \cite{BSTop}, and \cite{BSK15}
with ``local" harmonic analysis techniques 
 to obtain
our main estimate \eqref{main2}.

The paper is organized as follows.  In the next section we shall give the variation of the argument from \cite{BKak} that
yields \eqref{1.2}.  In \S3 we shall show how we can use it along with the results of \cite{Berard}, \cite{BSTop} and
\cite{BSK15} to obtain Proposition~\ref{prop1.2}.  
Then in \S4 we shall give the simple proof showing that we can use it and the aforementioned result of Bak and Seeger~\cite{BakSeeg}
to obtain the Theorem.  Finally,
in \S5, we shall state some natural problems related to our approach.
Also, in what follows whenever we write $A\lesssim B$, we mean that $A$ is dominated by an unimportant constant multiplied by $B$.

%Then there was \cite{HassellTacy}.  Plus Chris wrote a book \cite{Hang}.  Then there's Ralston's paper \cite{Ralston}. 
%What about the
%Sogge book \cite{Hang}.
%
%Wanted to test if the to appear thing works \cite{SHR}.  What about \cite{SFIO}, \cite{Sef} and \cite{Sthesis},  \cite{BSJ}, \cite{ChenS}, \cite{SKN},
%\cite{SZStein}, \cite{BKak}, \cite{BGT}, \cite{BSK15} \cite{BSTop} \cite{SCon} \cite{HezS} \cite{SZNod1} \cite{SZNod2} \cite{Berard} \cite{Bourgainef}
%\cite{HezR}?  \cite{Han} \cite{SZDuke} \cite{STZ} \cite{SZRev} \cite{SZRev2} \cite{BakSeeg} \cite{JP} \cite{ZQE} \cite{SnQE} \cite{CdVQE} \cite{Zrate}
%\cite{Tomas} \cite{doCarmo} \cite{Chavel} \cite{ILP} \cite{SSActa} \cite{Bint} \cite{JP0} \cite{LRud}  \cite{HIII} \cite{CM}

\newsection{The model local argument}

%In this section we shall present a simple argument which only uses energy estimates and a quantitative form of Huygens' principle to
%essentially recover all of the sharp $L^p$-estimates from \cite{Sef}.  We shall be able to modify this argument, using recent estimates
%of Blair and the author \cite{BSTop}, \cite{BSK15} to obtain improved weak-type $L^{p_c,\infty}$ estimates for the critical exponent
%$p_c=\tfrac{2(n+1)}{n-1}$ under the assumption of nonpositive curvature.
%
%The argument we shall present in this section for the local operators is a variant of BourgainÕs \cite{BKak} proof of a weak-type
%endpoint version of the Stein-Tomas Fourier restriction theorem \cite{Tomas}.  As was shown in Bak and Seeger \cite{BakSeeg}, for
%both the unit-band spectral projection operators and the Fourier restriction estimates, one can use these weak-type bounds and
%an interpolation scheme from Bourgain \cite{Bint} to deduce strong $L^{p_c}$-bounds.  For reasons that we shall explain later,
%it seems difficult to do this, though for the improved endpoint eigenfunction bounds that we shall present in the next section.

In this section we shall present an argument that yields the weak-type estimate \eqref{1.2} and serves as a model
for the argument that we shall use to prove Theorem~\ref{thm1.1}.

Let us fix a real-valued function $\rho\in {\mathcal S}(\R)$ satisfying
\begin{equation}\label{2.1}
\rho(0)=1, \, \, \,  \, \,  |\rho(\tau)|\le 1, \, \, \,  \text{and} \, \, \text{supp } \Hat \rho \subset (-1/2,1/2).
\end{equation}
If we set 
$$P=\sqrt{-\Delta_g},$$
consider the operators
\begin{equation}\label{2.2}
\rho(\la-P)f(x)=\sum_{j=0}^\infty \rho(\la-\la_j)E_jf(x),\end{equation}
where, as before, $0=\la_0<\la_1\le \la_2\le \cdots$ are the eigenvalues counted with respect to multiplicity and 
$E_j$ denotes projection onto the $j$th eigenspace
%If $|\Omega|$ denotes the $dV_g$-measure of a subset $\Omega\subset M$, we shall consider the 
%distribution functions
%\begin{equation}\label{2.2}
%\omega_h(\alpha)=|\{x\in M: \, |h(x)|>\alpha\}|
%\end{equation}
%of functions $h$ on $M$, and drop the subscript when the function has been specified.

%The main result of this section then is the following.

The ``local'' analog of Proposition~\ref{prop1.2} then is the following result whose
proof we shall modify in the next section to obtain the ``global'' weak-type estimates \eqref{main2'}.

\begin{proposition}\label{prop2.1}  For $\la\ge1$ there is a constant $C$, depending only on $(M,g)$, so that
\begin{equation}\label{2.3}
\bigl|\{x\in M: \, |\rho(\la-P)f(x)|>\alpha \}\bigr| \le C\la \alpha^{-\frac{2(n+1)}{n-1}}\|f\|_{L^2(M)}^{\frac{2(n+1)}{n-1}}, \quad
\alpha>0.
\end{equation}
Consequently, \eqref{1.2} is valid, and, moreover,
if $\chi_\la$ denotes the unit-band spectral projection operators
$$\chi_\la f=\sum_{\la_j\in [\la,\la+1]}E_jf,$$
we have
\begin{equation}\label{2.3'}\tag{2.3$^\prime$}
\bigl|\{x\in M: \, |\chi_\la f(x)|>\alpha \}\bigr| \le C\la \alpha^{-\frac{2(n+1)}{n-1}}\|f\|_{L^2(M)}^{\frac{2(n+1)}{n-1}}, \quad
\alpha>0. 
\end{equation}
\end{proposition}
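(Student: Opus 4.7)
The plan is to combine the self-adjointness of $T=\rho(\la-P)$ (since $\rho$ is real-valued and $P$ is self-adjoint) with a $TT^*$-style duality argument, feeding in a Stein--Tomas $L^{p_c'}\to L^{p_c}$ bound for the kernel of $T^2=\rho^2(\la-P)$ that comes from the Hadamard parametrix for the half-wave propagator, exactly in the spirit of Bourgain's derivation of weak-type restriction.

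First I would set up the duality. With $\|f\|_{L^2(M)}=1$, put $E_\alpha=\{x\in M:|Tf(x)|>\alpha\}$ and $g=\mathbf{1}_{E_\alpha}\cdot\overline{\sgn(Tf)}$, so $|g|\le 1$ with $\supp g\subset E_\alpha$. Using $T^*=T$ and Cauchy--Schwarz,
\[
\alpha|E_\alpha|\le\int_{E_\alpha}|Tf|\,dV_g=|\langle Tf,g\rangle|=|\langle f,Tg\rangle|\le\|Tg\|_{L^2},
\]
and squaring yields
\[
\alpha^2|E_\alpha|^2\le\|Tg\|_{L^2}^2=\langle T^2g,g\rangle=\langle\psi(\la-P)g,g\rangle,\qquad \psi:=\rho^2,
\]
with $\hat\psi=\hat\rho\ast\hat\rho$ supported in $(-1,1)$ by \eqref{2.1}.

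Next I would analyse the Schwartz kernel $K(x,y)$ of $\psi(\la-P)$. Writing $\psi(\la-P)=\tfrac{1}{2\pi}\int\hat\psi(t)e^{it\la}\cos(tP)\,dt$ and applying the Hadamard parametrix on $|t|<\tfrac12\mathrm{inj}(M,g)$ (legitimate by finite propagation speed and the support of $\hat\psi$), followed by stationary phase in $t$, gives the Carleson--Sj\"olin representation
\[
K(x,y)=\la^{(n-1)/2}\sum_{\pm}a_\pm(x,y;\la)\,e^{\pm i\la d_g(x,y)}+R(x,y),
\]
with symbols satisfying $|a_\pm(x,y;\la)|\lesssim d_g(x,y)^{-(n-1)/2}$ off the diagonal and a smoothing remainder $R$. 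The classical Stein--Carleson--Sj\"olin oscillatory integral theorem (equivalently, Sogge's $L^{p_c}$ spectral projection bound combined with $TT^*$) then yields
\[
\|\psi(\la-P)\|_{L^{p_c'}(M)\to L^{p_c}(M)}\lesssim\la^{(n-1)/(n+1)}=\la^{2/p_c}.
\]

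H\"older's inequality and $|g|\le 1$ give $\|g\|_{L^{p_c'}}\le|E_\alpha|^{1/p_c'}$, hence $\langle\psi(\la-P)g,g\rangle\lesssim\la^{2/p_c}|E_\alpha|^{2/p_c'}$. Combining with the squared duality bound and the identity $2-2/p_c'=2/p_c$ gives $|E_\alpha|^{2/p_c}\lesssim\la^{2/p_c}\alpha^{-2}$, i.e.\ $|E_\alpha|\lesssim\la\alpha^{-p_c}$, which is \eqref{2.3}. Taking $f=e_\la$ (so $Tf=\rho(0)e_\la=e_\la$) gives \eqref{1.2}, and \eqref{2.3'} follows by fixing $\rho$ with $\rho(\tau)\ge\tfrac12$ on $[-1,0]$ (for example the Fej\'er-type $\rho(\tau)=(\sin(\tau/4)/(\tau/4))^2$, whose Fourier transform has support in $(-1/2,1/2)$) and writing $\chi_\la f=\rho(\la-P)h$ with $h=\sum_{\la_j\in[\la,\la+1]}\rho(\la-\la_j)^{-1}E_jf$, so that $\|h\|_{L^2}\lesssim\|f\|_{L^2}$. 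The main technical input is the oscillatory kernel bound on $\psi(\la-P)$---classical in the present section---which in \S3 will be refined by a dyadic, scale-by-scale analysis of $K(x,y)$ so that the B\'erard sup-norm estimate and the Blair--Sogge $L^p$ estimates for $2<p<p_c$ can be inserted at the dangerous levels $\alpha\sim\la^{(n-1)/2}$ and $\alpha\sim\la^{(n-1)/4}$ respectively.
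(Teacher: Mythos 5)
Your argument is logically valid but it is not the paper's argument, and the difference is the whole point of this section. What you do is reduce the weak-type bound, via duality and H\"older, to the strong $L^{p_c'}\to L^{p_c}$ operator bound for $\rho^2(\la-P)$ --- but that bound is precisely the Carleson--Sj\"olin/Sogge spectral projection theorem, i.e.\ \eqref{1.1'} composed with its adjoint. You are therefore deducing the weak-type estimate from the strong-type estimate of which it is a trivial consequence; the paper itself notes, just before stating \eqref{1.2}, that this Chebyshev-type deduction is always available. The paper's proof deliberately avoids invoking the critical $L^{p_c}$ theorem. Its key move, following Bourgain, is an $\alpha$-dependent spatial decomposition: the level set $A$ is split into pieces $A_j=A\cap Q_j$ over a lattice of cubes of sidelength $r=\la\alpha^{-4/(n-1)}$, separated by $C_0r$, and the quadratic form $\sum_{j,k}\int S_\la a_j\,\overline{a_k}\,dV_g$ is estimated by treating the diagonal with the ball-localized $L^2$ bound \eqref{2.5} (pure energy and propagation-of-singularities estimates, yielding $I\lesssim r|A|$) and the off-diagonal with the pointwise kernel bound \eqref{2.6} (yielding $II\le C_0^{-\frac{n-1}2}\alpha^2|A|^2$, which is absorbed into the left side). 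Neither input by itself gives the critical exponent; it emerges from the choice of $r$.

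What each approach buys: yours is shorter but terminal, since no improvement of the strong $L^{p_c}$ bound under nonpositive curvature is known --- producing one is the goal of the paper --- so your route cannot be upgraded in \S 3. The paper's route isolates exactly the two inputs, \eqref{2.5} and \eqref{2.6}, that do admit logarithmic improvements under nonpositive curvature (the Blair--Sogge bounds handle $\alpha\lesssim\la^{\frac{n-1}4}(\log\la)^{\delta_n}$ via Chebyshev, and B\'erard-type kernel bounds handle the off-diagonal term in the complementary range), which is what makes Proposition~\ref{prop1.2} accessible. Your closing sentence about a ``dyadic, scale-by-scale refinement of $K(x,y)$'' in \S 3 does not describe how the improvement actually proceeds; the indispensable ingredient there is again the $\alpha$-dependent cube decomposition, which your proof does not contain. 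Your deductions of \eqref{1.2} and \eqref{2.3'} from \eqref{2.3} are fine.
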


Since $\rho(0)=1$ we have that $|\rho(\tau)|\ge 1/2$ for $|\tau|\le \delta$ for some $\delta>0$.  Thus, if one applies
\eqref{2.3} with $f$ replaced by $\sum_{\la_j\in [\la,\la+\delta]}E_jf$, one deduces that
$$\bigl|\bigl\{ \, |\sum_{\la_j\in [\la,\la+\delta]}E_jf(x)|>\alpha\bigr\}\bigr| \le C\la \alpha^{-\frac{2(n+1)}{n-1}}
\|f\|_{L^2(M)}^{\frac{2(n+1)}{n-1}}, \quad \alpha>0,$$
which implies \eqref{2.3'}.
So to prove  Proposition~\ref{prop2.1}, we just need to prove \eqref{2.3}.  

%Note that this inequality says that $\rho(\la-P)$ maps $L^2(M)$
%into weak--$L^{\frac{2(n+1)}{n-1}}(M)$ with norm $O(\la^{\frac{n-1}{2(n+1)}})$, i.e.,
%\begin{equation}\label{2.3''}\tag{2.3$^{\prime\prime}$}
%\|\rho(\la-P)\|_{L^2(M)\to L^{\frac{2(n+1)}{n-1},\infty}(M)}\le C\la^{\frac{n-1}{2(n+1)}},\quad \la\ge 1.
%\end{equation}
%Before proving \eqref{2.3} we note that by interpolating with the trivial $L^2$-estimate and the 
%sup-norm estimate
%$\|\rho(\la-P)\|_{L^2\to L^\infty}=O(\la^{\frac{n-1}2})$ of Avakumovi{\'c} ~\cite{Av} and Levitan~\cite{Lev} we obtain the 
%sharp estimates
%\begin{multline}\label{2.4}
%\|\chi_\la\|_{L^2(M)\to L^p(M)} =O(\la^{\mu(p)}),  \\
%\mu(p)=\max\bigl(\tfrac{n-1}2(\tfrac12-\tfrac1p), \, n(\tfrac12-\tfrac1p)-\tfrac12\bigr), 
%\, \, \, p\ne \tfrac{2(n+1)}{n-1}.
%\end{multline}

To prove \eqref{2.3}, we require the following lemma which will be useful in the sequel.  We shall assume, as we may, here and in what follows
that the injectivity radius of $M$, $\text{Inj }M$, satisfies
$$\text{Inj }M \ge 10.$$
Also, $B(x,r)$, $r<\text{Inj }M$, denotes the geodesic ball of radius $r$ about a point $x\in M$ with
respect to the Riemannian distance function $d_g(\, \cdot \, ,\, \cdot\, )$.  The result we need then is the following.

\begin{lemma}\label{lemma2.2}  Let $a\in C^\infty_0((-1,1))$.  Then there is a constant $C$, depending only on $(M,g)$ and the size
of finitely many derivatives of $a$, so that for $\la^{-1}\le r\le {\rm Inj} \, M$ we have
\begin{equation}\label{2.5}
\Bigl\|\int a(t)e^{it\la} \bigl(e^{-itP}f\bigr) \, dt \Bigr\|_{L^2(B(x,r))}\le Cr^{\frac12}\|f\|_{L^2(M)},
\end{equation}
and, also, if $\bigl(e^{-itP}\bigr)(x,y)$ denotes the kernel of the half-wave operators $e^{-itP}$, we have
\begin{multline}\label{2.6}
\bigl|\bigl(\Hat a(P-\la)\bigr)(x,y)\bigr| = \Bigl|  \int a(t)e^{it\la} \bigl(e^{-itP}\bigr)(x,y) \, dt\Bigr|
\\
\le C\la^{\frac{n-1}2}\bigl(d_g(x,y)+\la^{-1}\bigr)^{-\frac{n-1}2}.
\end{multline}
\end{lemma}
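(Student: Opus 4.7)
I will establish (2.6) first via the Hadamard parametrix and stationary phase, and then deduce (2.5) from it by a $TT^*$/duality reduction combined with a finer oscillatory argument. Since $\mathrm{supp}\,a\subset(-1,1)$ sits well inside the injectivity radius, the half-wave kernel $(e^{-itP})(x,y)$ admits, for $t$ in this range, its standard oscillatory-integral representation with phase $\phi(x,y,\xi)-t|\xi|_g$, where $\phi$ generates the geodesic flow and agrees with $\langle x-y,\xi\rangle$ in geodesic normal coordinates about $y$. Substituting into $\int a(t)e^{it\lambda}(\,\cdot\,)\,dt$ replaces the $t$-integral by the Schwartz factor $\hat a(|\xi|_g-\lambda)$, localizing $|\xi|_g$ to an $O(1)$-window about $\lambda$. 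In polar coordinates $\xi=\rho\omega$, stationary phase in $\omega$ (critical at the direction of the geodesic from $y$ to $x$) produces a gain of $(\lambda d_g(x,y))^{-(n-1)/2}$ once $\lambda d_g(x,y)\gtrsim 1$, which combined with the radial Jacobian $\rho^{n-1}\sim\lambda^{n-1}$ and the unit-scale $\rho$-support of $\hat a$ yields $|\hat a(P-\lambda)(x,y)|\lesssim\lambda^{(n-1)/2}d_g(x,y)^{-(n-1)/2}$. In the regime $d_g(x,y)\le\lambda^{-1}$ the trivial pointwise bound $\lambda^{n-1}$ applies, and the two regimes combine to give (2.6).

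For (2.5), by duality it suffices to show $\|T_\lambda^*g\|_{L^2(M)}\le Cr^{1/2}\|g\|_2$ whenever $g$ is supported in $B(x,r)$, where $T_\lambda=\hat a(P-\lambda)$. The $TT^*$ identity reduces this to a norm bound on $M_B\,\hat c(P-\lambda)\,M_B$, with $M_B$ multiplication by the characteristic function of $B=B(x,r)$ and $c=a*\bar a(-\cdot)\in C_0^\infty((-2,2))$ still satisfying the hypotheses of (2.6) (since $\mathrm{Inj}\,M\ge 10$). At the minimal scale $r\sim\lambda^{-1}$ a direct Schur test on the kernel bound from (2.6) already delivers the desired $O(r)$ estimate. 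For larger $r$ one instead exploits that the kernel equals, to leading order, $\lambda^{(n-1)/2}e^{\pm i\lambda d_g(x,y)}$ times a smooth amplitude; a natural way to harvest this cancellation is a semiclassical wave-packet decomposition of $f$ at the Heisenberg scale $\lambda^{-1/2}$, where each packet propagates under $e^{-itP}$ along a single geodesic. Then $T_\lambda f_\nu$ is concentrated, up to rapid decay, in a $\lambda^{-1/2}$-tube about a unit-length geodesic segment, which meets $B(x,r)$ in a piece of length at most $2r$. This gives $\|T_\lambda f_\nu\|_{L^2(B(x,r))}^2\lesssim r\|f_\nu\|_2^2$, and the almost orthogonality of the wave-packet decomposition then sums to (2.5).

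The main obstacle is (2.5) at intermediate scales $\lambda^{-1}\ll r\ll 1$: a Schur test using only the pointwise bound (2.6) is off by a factor of $(\lambda r)^{(n-1)/2}$ from what is needed, so one must actually use the oscillatory phase $e^{\pm i\lambda d_g(x,y)}$ carried by the kernel. That this level of sharpness is necessary is confirmed by the highest-weight spherical harmonics on $S^n$, which saturate (2.5) tightly along their $\lambda^{-1/2}$-concentration tubes.
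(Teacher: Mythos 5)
Your sketch of \eqref{2.6} via the Hadamard parametrix and stationary phase is consistent with what the paper intends; the paper omits that proof entirely and cites the standard argument (Lemma~5.1.3 of \cite{SFIO}).  For \eqref{2.5}, however, you take a genuinely different route from the paper's, and your route contains a real gap.  The paper's proof is elementary: after a preliminary Littlewood--Paley reduction to frequencies $P\approx\lambda$ and the same $TT^*$ step, it splits the $t$-integral into $|t|\le 10r$ and $|t|\ge 10r$.  The short-time part is bounded by $r\|h\|_{L^2}$ by Minkowski's inequality plus the unitarity of $e^{-itP}$ (no support hypothesis on $h$ needed); the long-time part uses quantitative finite speed of propagation, namely that $(\beta(P/\lambda)e^{-itP})(w,z)=O(\lambda^n(1+\lambda|t|)^{-N})$ when $d_g(w,z)\le|t|/2$, which applies for $w,z\in B(x,r)$, $|t|\ge 10r$, and then a Schur/Cauchy--Schwarz estimate again gives $O(r)\|h\|_{L^2}$.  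No wave packets, no almost-orthogonality, and (2.6) is never used in the proof of (2.5).

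By contrast, your wave-packet argument leaves its decisive step unjustified.  The assertion ``the almost orthogonality of the wave-packet decomposition then sums to \eqref{2.5}'' is where the actual content lies: the restrictions of $T_\lambda f_\nu$ to $B(x,r)$ are not pairwise orthogonal, and many $\lambda^{-1/2}$-tubes (with a range of directions) can pass through a single ball of radius $r$ and overlap there.  One would need a Cotlar--Stein or Bessel-type estimate quantifying the interaction of tubes inside $B(x,r)$ to avoid a lossy overcount, and this is nontrivial exactly at the intermediate scales $\lambda^{-1}\ll r\ll 1$ that you single out as the obstacle.  Likewise, the single-packet bound $\|T_\lambda f_\nu\|_{L^2(B(x,r))}^2\lesssim r\|f_\nu\|_2^2$ is justified only by the heuristic that the $L^2$ mass of $T_\lambda f_\nu$ is roughly uniformly distributed over a unit-length tube; this is plausible but needs an argument.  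The paper's short-time/long-time decomposition using energy estimates and propagation of singularities is both simpler and complete, and is the proof you should compare against.
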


We shall omit the proof of \eqref{2.6} since it is well known and follows easily from using stationary phase and
parametrices for the half-wave equation.  One can easily obtain \eqref{2.6} by adapting the proof of 
Lemma~5.1.3 in \cite{SFIO}.

Even though \eqref{2.5} is in a recent article of the author~\cite{SHR}, for the sake of completeness, we shall present
a different simple proof here, which only uses energy estimates and quantitative propagation of 
singularities estimates for the half-wave operators.

We start by introducing a Littlewood-Paley bump function $\beta\in C^\infty_0(\R)$ satisfying
\begin{equation}\label{2.7}
\beta(\tau)=1, \, \, \tau\in [1/2,2], \quad \text{and } \, \, \text{supp }\beta\subset (1/4,4).
\end{equation}
Then standard arguments using the aforementioned parametrix show that for any $N$, we have that
$$\Bigl\|\int a(t)e^{it\la}\bigl(I-\beta(P/\la)\bigr)\circ e^{-itP} \, dt\Bigr\|_{L^2(M)\to L^2(M)} = O(\la^{-N}),$$
where for each $N\in {\mathbb N}$ the constants depend only on finitely many derivatives of $a$.
Thus, to prove \eqref{2.5}, it suffices to prove the variant where $e^{-itP}$ is replaced by
$\beta(P/\la)\circ e^{-itP}$.  By a routine $TT^*$ argument, this in turn is equivalent to showing that
\begin{multline}\label{2.5'•}\tag{2.5$^\prime$}
\Bigl\|\int b(t) e^{it\la}\bigl(\beta(P/\la)\circ e^{-itP}\bigr)h \, dt \Bigr\|_{L^2(B(x,r))} \le Cr\|h\|_{L^2(B(x,r))},
\\
\text{if } \, \, \text{supp }h\subset B(x,r) \, \, \text{and } \, \, \la^{-1}\le r\le \text{Inj }M,
\end{multline}
with
$$b=a(\cd)*\overline{a(-\cd)}.$$

By Minkowski's inequality, the left side of \eqref{2.5'•} is dominated by
\begin{multline*}
\int_{|t|\le 10r}|b(t)| \, \bigl\| \bigl(\beta(P/\la)\circ e^{-itP}\bigr)h \,  \bigr\|_{L^2(B(x,r))}\, dt
\\
+\int_{|t|\ge 10r}|b(t)| \, \bigl\| \bigl(\beta(P/\la)\circ e^{-itP}\bigr)h \,  \bigr\|_{L^2(B(x,r))}\, dt
=I + II.
\end{multline*}
By energy estimates, we trivially have
$$I\lesssim r\|h\|_{L^2},$$
as desired, and we do not need to use our support assumptions in \eqref{2.5'•} here.

To handle $II$, though, we do need to make use of them.  We also need the routine dyadic
estimates
\begin{equation}\label{2.8}
\Bigl|\bigl(\beta(P/\la)\circ e^{-itP}\bigr)(w,z)\Bigr| =O\bigl(\la^n(1+\la|t|\bigr)^{-N}\bigr) \, \, \forall \, N, 
\quad \text{if } \, d_g(w,z)\le |t|/2,
\end{equation}
which also follows easily from an integration by parts argument using the parametrix for $e^{-itP}$.  From
\eqref{2.8} we immediately get
\begin{equation*}
\Bigl|\bigl(\beta(P/\la)\circ e^{-itP}\bigr)(w,z)\Bigr| =
O\bigl(\la^n(1+\la|t|)^{-N}\bigr) \, \forall N, \, \, 
\text{if }  \, w,z\in B(x,r) \, \, \text{and } \, \, |t|\ge 10r.
\end{equation*}
As a result, by Schwarz's inequality, we have that if, as in \eqref{2.5'•}, $\text{supp }h\subset B(x,r)$,
$$II\lesssim (r\la)^n\Bigl(\int_{|t|\ge 10r}\bigl(\la|t|\bigr)^{-n}\, dt\Bigr)\times \|h\|_{L^2} \approx r\|h\|_{L^2},
$$
as desired, completing the proof of \eqref{2.5'•}.

\begin{proof}[Proof of Proposition~\ref{prop2.1}]  To prove \eqref{2.3} it suffices to show that if $\Omega$
is a relatively compact subset of a coordinate patch $\Omega_0$ for $M$ then we have
\begin{equation}\label{2.9}
\bigl|\bigl\{x\in \Omega: \, |\rho(\la-P)f(x)|>\alpha \bigr\}\bigr| \le C\la \alpha^{-\frac{2(n+1)}{n-1}}, \quad \alpha>0,
\end{equation}
assuming that
\begin{equation}\label{2.10}
\|f\|_{L^2(M)}=1.
\end{equation}
We shall work in these local coordinates to make the decomposition we require.

Let 
$$A=\{x\in \Omega: \, |\rho(\la-P)f(x)|>\alpha\}$$
denote the set in \eqref{2.9}.  Our decomposition will be based on the scale
\begin{equation}\label{2.11}
r=\la \alpha^{-\frac4{n-1}},
\end{equation}
which is motivated by an argument in Bourgain~\cite{BKak}.  
Note that, since the sup-norm estimates of Avakumovi{\'c} \cite{Av} and Levitan \cite{Lev} give
$$\|\rho(\la-P)f\|_{L^\infty}=O(\la^{\frac{n-1}2}),$$
the estimate \eqref{2.9} is trivial when $r$ is smaller than a multiple of $\la^{-1}$, which allows us to use \eqref{2.5}.

Write
$$A=\bigcup A_j,$$
where $A_j=A\cap Q_j$ and $Q_j$ denote a nonoverlapping lattice of cubes of sidelength $r$ in our 
coordinates.  At the expense of replacing $A$ by a set of proportional measure, we may assume
that
\begin{equation}\label{2.12}
\text{dist } (A_j,A_k)>C_0 r, \quad j\ne k,
\end{equation}
for a constant $C_0$ to be specified later.  Also, let
\begin{equation}\label{2.13}
\psi_\la(x)=
\begin{cases} \rho(\la-P)f(x)/|\rho(\la-P)f(x)|, \, \, \text{if } \, \rho(\la-P)f(x)\ne 0
\\
1, \, \, \text{otherwise},
\end{cases}
\end{equation}
so that $\psi_\la$, of modulus one, is the signum function of $\rho(\la-P)f$.

We then have, by Chebyshev's inequality, \eqref{2.10} and the Cauchy-Schwarz inequality,
$$\alpha|A|
\le \Bigl|\int \rho(\la-P)f \, \,  \overline{\psi_\la \1_A} \, dV_g\Bigr|
\le \Bigl(\int \bigl|\sum_j \rho(\la-P)a_j\bigr|^2 \, dV_g\Bigr)^{\frac12},
$$
where $\1_A$ denotes the indicator function of $A$ and $a_j$ denotes $\psi_\la$ times the 
indicator function of $A_j$.  As a result, if $S_\la=\bigl(\rho(\la-P)^*\circ \rho(\la-P)\bigr)=\rho^2(\la-P)$,
\begin{align*}
\alpha^2|A|^2 &\le \sum_j \int |\rho(\la-P)a_j|^2 \, dV_g + \sum_{j\ne k}\int \rho(\la-P)a_j \, \overline{\rho(\la-P)a_k}\, dV_g
\\
&=\sum_j \int|\rho(\la-P)a_j|^2 \, dV_g+ \sum_{j\ne k} \int S_\la a_j \, \overline{a_k} \, dV_g
\\
&=I + II.
\end{align*}

Since $a_j$ is supported in a ball of radius $\approx r$, by \eqref{2.1} and the dual version of \eqref{2.5} with $a=\Hat \rho$, we have
$$\int |\rho(\la-P)a_j|^2 \, dV_g \le Cr\int |a_j|^2 \, dV_g =Cr|A_j|.$$
Whence, by \eqref{2.11}
$$I\lesssim r|A|=\la\alpha^{-\frac4{n-1}}|A|.$$

To estimate $II$, we note that by \eqref{2.6} with $a=\Hat \rho(\cd) *\overline{\Hat \rho(-\cd)}$, we have that the 
kernel $K_\la(x,y)$ of $S_\la$ satisfies
\begin{equation}\label{2.14}
|K_\la(x,y)|\le C\la^{\frac{n-1}2}\bigl(d_g(x,y)+\la^{-1}\bigr)^{-\frac{n-1}2}.
\end{equation}
Therefore, by \eqref{2.13},
\begin{align*}
II &\lesssim \sum_{j\ne k} \iint |K_\la(x,y)| \, |a_j(x)| \, |a_k(y)| \, dV_g(x)dV_g(y)
\\
&\lesssim \la^{\frac{n-1}2} \bigl(C_0r\bigr)^{-\frac{n-1}2} \sum_{j\ne k}\|a_j\|_{L^1}\|a_k\|_{L^1}
\\
&\le C_0^{-\frac{n-1}2} \alpha^2|A|^2.
\end{align*}
Thus, 
$$\alpha^2|A|^2 \lesssim \la\alpha^{-\frac4{n-1}} |A|+C_0^{-\frac{n-1}2}\alpha^2|A|^2,$$
and so, if $C_0$ in \eqref{2.12} is large enough, the last term can be absorbed in the left side.
We conclude that
$$|A|\lesssim \la \alpha^{-2-\frac4{n-1}}=\la \alpha^{-\frac{2(n+1)}{n-1}},$$
which is \eqref{2.9}.
\end{proof}

\newsection{Proof of improved weak-type estimates}

We shall now prove Proposition~\ref{prop1.2}.  Repeating the arguments from the previous section shows
that if $\rho\in {\mathcal S}(\R)$ is as in \eqref{2.1} then it suffices to show that we have the following

\begin{proposition}\label{prop3.1}  Let  $(M,g)$ be an $n$-dimensional compact Riemannian manifold of 
nonpositive curvature.  Then for $\la\gg 1$
\begin{equation}\label{3.1}
\bigl\|\rho(\log\la(\la-P))\bigr\|_{L^2(M)\to L^{\frac{2(n+1)}{n-1},\infty}(M)} =O\bigl(\la^{\frac{n-1}{2(n+1)}}/(\log\log\la)^{\frac1{n+1}}\bigr).
\end{equation}
\end{proposition}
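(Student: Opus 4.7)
The plan is to adapt the Bourgain-style argument of Proposition~2.1, now applied to $\rho(\log\lambda(\lambda-P))$ in place of $\rho(\lambda-P)$, exploiting nonpositive curvature to extend the favorable time-window in the wave representation out to $|t|\le c\log\lambda$. Writing
$$\rho(\log\lambda(\lambda-P))=\frac{1}{2\pi\log\lambda}\int\hat\rho\bigl(t/\log\lambda\bigr)e^{it\lambda}e^{-itP}\,dt,$$
the effective weight $a(t)=\hat\rho(t/\log\lambda)/\log\lambda$ has $\|a\|_\infty\lesssim 1/\log\lambda$ and support in $|t|\le \tfrac12\log\lambda$. The first step is therefore to produce nonpositive-curvature analogues of Lemma~2.2 on this longer time interval, by lifting to the universal cover via Cartan--Hadamard and applying the exact Hadamard parametrix as in B\'erard~\cite{Berard}: namely, the ball-$L^2$ bound $\|\rho(\log\lambda(\lambda-P))f\|_{L^2(B(x,r))}\lesssim (r/\log\lambda)^{1/2}\|f\|_{L^2}$ for $\lambda^{-1}\le r\le c\log\lambda$, and the kernel bound $|K_\lambda(x,y)|\lesssim \lambda^{(n-1)/2}(d_g(x,y)+\lambda^{-1})^{-(n-1)/2}$ for the kernel of $\rho^2(\log\lambda(\lambda-P))$. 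In both cases the $\|a\|_\infty\lesssim 1/\log\lambda$ bound entering through Minkowski is what supplies the gain over Section~2.

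With these two ingredients in hand, I would run the Section~2 decomposition almost verbatim. Fixing $\Omega$ in a coordinate patch and taking $\|f\|_{L^2}=1$, set $A=\{x\in\Omega:|\rho(\log\lambda(\lambda-P))f(x)|>\alpha\}$, cover $A$ by $C_0 r$-separated cubes of sidelength $r=\lambda\alpha^{-4/(n-1)}$, and arrive at $\alpha^2|A|^2\lesssim I+II$. The kernel bound gives $II\lesssim C_0^{-(n-1)/2}\alpha^2|A|^2$ exactly as in Proposition~2.1, which is absorbed after choosing $C_0$ large. The improved ball-$L^2$ bound, via duality, gives $I\lesssim (r/\log\lambda)|A|$, and so the Bourgain output is $|A|\lesssim \lambda\alpha^{-2(n+1)/(n-1)}/\log\lambda$ on the range of $\alpha$ where both ingredients are legitimate.

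At the two endpoints of this range, corresponding to the dangerous scales $\alpha\approx \lambda^{(n-1)/2}$ and $\alpha\approx \lambda^{(n-1)/4}$ singled out in the introduction, I would graft in the earlier improved estimates. For $\alpha$ close to $\lambda^{(n-1)/2}$, B\'erard's sup-norm bound forces $A=\emptyset$ above $\alpha\sim\lambda^{(n-1)/2}/\sqrt{\log\lambda}$; for $\alpha$ close to $\lambda^{(n-1)/4}$, the improved $L^p$ bounds of \cite{BSTop}, \cite{BSK15} for $2<p<p_c$, combined with Chebyshev's inequality, give the appropriate distributional improvement. The three regimes overlap, and the uniform weak-type bound of Proposition~3.1 follows by taking the worst rate.

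I expect the main difficulty to be the bookkeeping of non-identity deck transformations in the Cartan--Hadamard lift. Extending the parametrix-based kernel and energy estimates out to $|t|\le c\log\lambda$ produces an exponentially growing number of terms at distance $\lesssim \log\lambda$, and it is this loss that collapses the naive $(\log\lambda)^{-c}$ gain of the Bourgain step down to the advertised $(\log\log\lambda)^{-1/(n+1)}$. Arranging the B\'erard, Blair--Sogge, and Bourgain patches so that they all produce at least this rate, and verifying that the improved $L^p$ bounds translate cleanly into weak-$L^{p_c}$ gains in the small-$\alpha$ regime, is where care is needed.
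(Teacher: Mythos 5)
Your overall architecture --- three regimes in $\alpha$, with B\'erard's sup-norm bound emptying the set for $\alpha\gtrsim\lambda^{\frac{n-1}2}(\log\lambda)^{-\frac12}$ (the paper actually only needs $(\log\log\lambda)^{-\frac12}$ here), the Blair--Sogge bounds plus Chebyshev handling $\alpha\le\lambda^{\frac{n-1}4}(\log\lambda)^{\delta_n}$, and a Bourgain-type decomposition in between --- matches the paper. But the two ingredients you feed into the Bourgain step are not available at the time scale $T=\log\lambda$, and this is a genuine gap rather than bookkeeping. First, the localized bound $\|\rho(\log\lambda(\lambda-P))f\|_{L^2(B(x,r))}\lesssim (r/\log\lambda)^{1/2}\|f\|_{L^2}$ is essentially the open problem the author records in \S 5: Minkowski plus unitarity of $e^{-itP}$, after splitting $|t|\le\tfrac12 T$ into unit intervals and applying \eqref{2.5} on each, gives only $\tfrac1T\cdot T\cdot r^{1/2}=r^{1/2}$; a $T^{-1/2}$ gain would require quasi-orthogonality of the contributions of the different unit time intervals, which is not known (such improvements are available only for density-one subsequences or for tori, per Han, Hezari--Rivi\`ere, Lester--Rudnick). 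The paper's \eqref{3.13} accordingly carries no gain in $T$. Second, the kernel of $\rho^2(T(\lambda-P))$ does \emph{not} satisfy $\lambda^{\frac{n-1}2}(d_g(x,y)+\lambda^{-1})^{-\frac{n-1}2}$ uniformly for $T=\log\lambda$: B\'erard's method gives $CT^{-1}(\lambda/d_g(w,z))^{\frac{n-1}2}+C\lambda^{\frac{n-1}2}e^{CT}$ (Lemma~\ref{lemma3.3}), and at $T=\log\lambda$ the second term is $\lambda^{\frac{n-1}2+C}$, which destroys the absorption of the off-diagonal term $II$. You flag the exponential proliferation of deck transformations as ``the main difficulty,'' but you do not supply the device that resolves it.

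That device is a change of scale: the paper runs the Bourgain argument not for $\rho(\log\lambda(\lambda-P))$ but for $\rho(c_0\log\log\lambda(\lambda-P))$, so that $e^{CT}=(\log\lambda)^{Cc_0}$ is a small power of $\log\lambda$ and the resulting extra contribution $\lambda^{\frac{n-1}2}(\log\lambda)^{\delta_n/10}|A|^2$ to $II$ is dominated by $\alpha^2|A|^2$ precisely because the Blair--Sogge step has already restricted attention to $\alpha\ge\lambda^{\frac{n-1}4}(\log\lambda)^{\delta_n}$. Passing from the $(\log\lambda)^{-1}$-window operator to the $(\log\log\lambda)^{-1}$-window one costs an additional comparison estimate, namely the bound on $[\rho(c_0\log\log\lambda(\lambda-P))-I]\circ\rho(\log\lambda(\lambda-P))$. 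Finally, the gain in the diagonal term $I$ comes not from an improved localization estimate but from shrinking the cube size to $r=\lambda\alpha^{-\frac4{n-1}}(\log\log\lambda)^{-\frac2{n-1}}$, which is affordable only because the leading term of the kernel bound carries the factor $T^{-1}=(c_0\log\log\lambda)^{-1}$. Without these adjustments your middle regime does not close, and the $(\log\lambda)^{-1}$ gain you predict there is not attainable by these methods.
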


The earlier arguments show that
% if $\chi_{[\la,\la+(\log\la)^{-1}]}$ denotes the projection onto frequencies $\la_j\in [\la,\la+(\log\la)^{-1}]$ then
\eqref{3.1} yields \eqref{main2''} and hence Proposition~\ref{prop1.2}
%\begin{equation}\label{3.1'}\tag{3.1$'$}
%\bigl\|\chi_{[\la,\la+(\log\la)^{-1}]}\bigr\|_{L^2(M)\to L^{\frac{2(n+1)}{n-1},\infty}(M)}=O\bigl(\la^{\frac{n-1}{2(n+1)}}/(\log\log\la)^{\frac1{n+1}}\bigr),
%\end{equation}
assuming, as in there and as we shall throughout this section, that the sectional curvatures of $(M,g)$ are nonpositive.

To prove \eqref{3.1}, as in \eqref{2.9}, it suffices to show now that if $\Omega$ is a relatively compact subset of a coordinate
patch $\Omega_0$, then
\begin{equation}\label{3.2}
\bigl|\bigl\{x\in \Omega: \, |\rho(\log\la(\la-P))f(x)|>\alpha\bigr\}\bigr| \le C\alpha^{-\frac{2(n+1)}{n-1}} \la/(\log\log\la)^{\frac2{n-1}},
\end{equation}
assuming that
\begin{equation}\label{3.3}
\|f\|_{L^2(M)}=1.
\end{equation}

To prove this, in addition to \eqref{2.5}, we shall require the following two results.

\begin{lemma}\label{lemma3.2}  Let $(M,g)$ be as above.  Then there is a $\delta_n>0$ so that for $\la\gg 1$ and
$\mu(p)$ as in \eqref{1.1}
\begin{equation}\label{3.4}
\bigl\|\rho(\log\la(\la-P))\bigr\|_{L^2(M)\to L^{\frac{2n}{n-1}}(M)}=O\bigl(\la^{\mu\bigl(\tfrac{2n}{n-1}\bigr)}/(\log\la)^{\delta_n}).
\end{equation}
\end{lemma}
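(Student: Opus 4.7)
The plan is to reduce Lemma~\ref{lemma3.2} to the log-improved $L^p$ estimates of Blair and the author in \cite{BSTop}, \cite{BSK15}, which apply for the entire range $2<p<p_c=\tfrac{2(n+1)}{n-1}$.  Since $\tfrac{2n}{n-1}<p_c$, this input is available: those works establish, under nonpositive curvature, that for some $\delta_n>0$ one has the sharp-window bound
$$\bigl\|\chi_{[\la,\la+(\log\la)^{-1}]}\bigr\|_{L^2(M)\to L^{2n/(n-1)}(M)}=O\bigl(\la^{\mu(2n/(n-1))}(\log\la)^{-\delta_n}\bigr).$$

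First I would record the Fourier representation
$$\rho(\log\la(\la-P))=\frac{1}{\log\la}\int\hat\rho(t/\log\la)\,e^{it\la}\,e^{-itP}\,dt,$$
where, since $\supp\hat\rho\subset(-\tfrac12,\tfrac12)$, the $t$-integration is restricted to $|t|\le\tfrac12\log\la$.  This exhibits $\rho(\log\la(\la-P))$ as exactly the logarithmic-time smoothed analogue of the sharp projector on $[\la,\la+(\log\la)^{-1}]$ that Blair and the author analyze by lifting the Hadamard parametrix for $e^{-itP}$ to the universal cover $\widetilde M$ and invoking Toponogov's triangle comparison theorem to control the geometry of the wave kernel on the logarithmically long time interval.

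To convert their sharp-window bound into the smoothed statement \eqref{3.4}, I would decompose the spectrum into windows $I_k=[\la+(k-1)(\log\la)^{-1},\,\la+k(\log\la)^{-1}]$, $k\in\mathbb Z$, and write $\rho(\log\la(\la-P))=\sum_k T_k$ with $T_k=\sum_{\la_j\in I_k}\rho(\log\la(\la-\la_j))E_j$.  A standard $TT^*$ argument, using the quadratic-form domination
$$T_kT_k^*\;=\;\sum_{\la_j\in I_k}|\rho(\log\la(\la-\la_j))|^2\,E_j\;\le\;\Bigl(\sup_{\la_j\in I_k}|\rho(\log\la(\la-\la_j))|\Bigr)^2\chi_{I_k},$$
yields
$$\|T_k\|_{L^2(M)\to L^{2n/(n-1)}(M)}\lesssim_N(1+|k|)^{-N}\,\|\chi_{I_k}\|_{L^2(M)\to L^{2n/(n-1)}(M)},$$
by the rapid decay of $\rho$.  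Since the Blair-Sogge bound is uniform in $k$ (the frequency scale remains $\sim\la$), summing in $k$ and absorbing the absolutely convergent factor $\sum_k(1+|k|)^{-N}$ gives \eqref{3.4}.

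The genuine obstacle, of course, is the Blair-Sogge input itself: beating the Sogge bound by a power of $\log\la$ on the full logarithmically long time window requires the delicate microlocal-geometric argument on $\widetilde M$ of \cite{BSTop}, \cite{BSK15}, in which the admissible $\delta_n$ comes out of a tube-counting estimate exploiting the thin-triangle property of nonpositive curvature via Toponogov.  Granting that input, the reduction sketched above is routine.
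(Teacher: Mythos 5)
Your proposal is correct and matches the paper's intent: the paper disposes of Lemma~\ref{lemma3.2} in one line as ``a simple consequence'' of the sharp-window bound \eqref{3.4'} from \cite{BSTop}, and your decomposition into $(\log\la)^{-1}$-windows with the Schwartz decay of $\rho$ absorbing the $k$-sum is precisely the routine reduction being invoked. One small caution: the step from the $L^2$ operator domination $T_kT_k^*\le \sup^2\chi_{I_k}$ to the $L^2\to L^{2n/(n-1)}$ bound is not an automatic consequence of positive-semidefinite domination; the clean way to get it is to note that $T_kf$ lies in the range of $\chi_{I_k}$, so $\|T_kf\|_{L^p}=\|\chi_{I_k}(T_kf)\|_{L^p}\le \|\chi_{I_k}\|_{L^2\to L^p}\|T_kf\|_{L^2}\lesssim_N(1+|k|)^{-N}\|\chi_{I_k}\|_{L^2\to L^p}\|f\|_{L^2}$, which gives the same conclusion.
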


\begin{lemma}\label{lemma3.3}  If $(M,g)$ is as above then there is a constant $C=C(M,g)$ so that for $T\ge 1$ and large
$\la$ we have the following bounds for the kernel of $\eta(T(\la-P))$, $\eta=\rho^2$,
\begin{equation}\label{3.5}
\bigl|\eta \bigl(T(\la-P)\bigr)(w,z)\bigr| \le CT^{-1}(\la/d_g(w,z))^{\frac{n-1}2} + C\la^{\frac{n-1}2}\exp(CT).
\end{equation}
\end{lemma}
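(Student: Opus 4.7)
The plan is to express the spectral multiplier as a time-integral of the half-wave propagator, lift to the universal cover so that a global Hadamard parametrix applies, and sum the resulting contributions over the deck transformation group. Since $\eta=\rho^2$ and $\hat\rho$ is supported in $(-1/2,1/2)$, the convolution $\hat\eta=\hat\rho*\hat\rho$ is supported in $(-1,1)$, so Fourier inversion gives
$$\eta(T(\la-P))(w,z)=\frac{1}{2\pi T}\int_{-T}^{T}\hat\eta(t/T)\, e^{it\la}\,\bigl(e^{-itP}\bigr)(w,z)\, dt.$$
By the Cartan--Hadamard theorem, the nonpositive curvature hypothesis makes the universal cover $(\tilde M,\tilde g)$ diffeomorphic to $\R^n$ and free of conjugate points. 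Writing $\Gamma=\pi_1(M)$ and fixing lifts $\tilde w,\tilde z$, the pull-back of the half-wave kernel is
$$\bigl(e^{-itP}\bigr)(w,z)=\sum_{\alpha\in\Gamma}\bigl(e^{-itP_{\tilde M}}\bigr)(\tilde w,\alpha(\tilde z)),$$
and finite propagation speed restricts the nonzero contributions to deck transformations with $r_\alpha:=d_{\tilde g}(\tilde w,\alpha(\tilde z))\le T$.

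Next, I would estimate the contribution of a single lift. Since $\tilde M$ has no conjugate points, the Hadamard parametrix for $e^{-itP_{\tilde M}}$ is globally valid and represents this kernel as an oscillatory integral in a cotangent variable $\xi$ with phase of the form $\phi(\tilde w,\alpha(\tilde z),\xi)-t|\xi|$, where $\phi$ satisfies the eikonal equation. Integrating in $t$ first produces the factor $\eta(T(\la-|\xi|))$, which localizes $|\xi|$ to a radial band of width $O(T^{-1})$ about $\la$; stationary phase in the angular variables of $\xi$ (whose Hessian has size comparable to $\la r_\alpha$ by the non-conjugacy of $\tilde M$) then yields
$$\Bigl|\frac{1}{T}\int\hat\eta(t/T)\, e^{it\la}\,\bigl(e^{-itP_{\tilde M}}\bigr)(\tilde w,\alpha(\tilde z))\, dt\Bigr|\le CT^{-1}\bigl(\la/r_\alpha\bigr)^{(n-1)/2},$$
valid whenever $r_\alpha$ is bounded below by a positive constant; the same bound holds with $r_\alpha$ replaced by $d_g(w,z)$ for the identity lift.

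Finally I would add up the lift contributions. The nearest lift, for which $r_\alpha=d_g(w,z)$, produces the first claimed term $CT^{-1}(\la/d_g(w,z))^{(n-1)/2}$. Every other lift satisfies $r_\alpha\ge c_0$ for some constant $c_0>0$ depending on the injectivity radius of the cover, so its contribution is bounded by $CT^{-1}\la^{(n-1)/2}$. Under the nonpositive curvature hypothesis, Rauch comparison (with a lower curvature bound from compactness of $M$) gives the universal-cover volume estimate $|B(\tilde w,T)|\le C\exp(CT)$; a packing argument based on the injectivity radius then shows that at most $C\exp(CT)$ deck transformations satisfy $r_\alpha\le T$. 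Summing and absorbing the $T^{-1}$ into the exponential (using $T\ge 1$) gives the second term $C\la^{(n-1)/2}\exp(CT)$.

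The main obstacle is making the global Hadamard parametrix and the angular stationary phase rigorous with \emph{uniform} constants over the entire range $r_\alpha\in[c_0,T]$: one needs two-sided Jacobi field bounds on $\tilde M$ guaranteeing that the amplitude remains nondegenerate and that the angular Hessian of $\phi$ remains comparable to $\la r_\alpha$, and one must control the full symbolic expansion so that error terms do not overwhelm the second term when $T$ is large. Both ingredients are by now standard consequences of the absence of conjugate points together with a fixed lower curvature bound, and the accompanying exponential volume count on $\tilde M$ is classical.
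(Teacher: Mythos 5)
The paper does not write out a proof of Lemma~\ref{lemma3.3}; it simply notes that \eqref{3.5} ``is well known and follows from the arguments in B\'erard'' and is ``a simple consequence of inequality (3.6.8)'' in the Hangzhou lecture notes. Your argument --- Fourier synthesis of $\eta(T(\la-P))$ from the half-wave group, lifting to the Cartan--Hadamard universal cover, applying the global Hadamard parametrix together with stationary phase to each deck-translated copy, and then controlling the sum by the $e^{CT}$ volume-growth/packing bound --- is precisely the B\'erard argument that the paper is invoking, so you have reconstructed the intended proof correctly and by the same route.
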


The first estimate, \eqref{3.4}, is a simple consequence of the bounds
\begin{equation}\label{3.4'}\tag{3.4$'$}
\bigl\|\chi_{[\la,\la+(\log\la)^{-1}]}\bigr\|_{L^2(M)\to L^p(M)}\le \la^{\mu(p)}/(\log\la)^{\delta(p,n)}, \quad
2<p<\tfrac{2(n+1)}{n-1},
\end{equation}
with $\delta(p,n)>0$ from \cite{BSTop} for the special case of $p=\tfrac{2n}{n-1}$.  Any other exponent between
2 and $\tfrac{2(n+1)}{n-1}$ in \eqref{3.4'} would work as well for us.  We just chose $p=\tfrac{2n}{n-1}$ to
simplify the calculations.

The other bound, \eqref{3.5}, is well known and follows from the arguments in B\'erard~\cite{Berard}.  Indeed, it is 
a simple consequence of inequality (3.6.8) in \cite{Hang}.

Let us see how we can use these results to obtain \eqref{3.2}.

We first note that by Lemma~\ref{lemma3.2} and the Chebyshev inequality we have that
since $\tfrac{2n}{n-1}\cdot \mu(\tfrac{2n}{n-1})=\tfrac12$,
\begin{align}\label{3.6}
\bigl|\bigl\{x\in \Omega: \, |\rho(\log\la(\la-P))f(x)|>\alpha\bigr\}\bigr|
&\le \alpha^{-\frac{2n}{n-1}}\int_M |\rho(\log\la(\la-P))f|^{\frac{2n}{n-1}} \, dV_g
\\
&\lesssim \alpha^{-\frac{2n}{n-1}}\la^{\frac12}(\log\la)^{-\frac{2n}{n-1}\delta_n}.\notag
\end{align}

To use this, we note that for large $\la$ we have
\begin{equation}\label{3.7}
\alpha^{-\frac{2n}{n-1}}\la^{\frac12}(\log\la)^{-\frac{2n}{n-1}\delta_n}\ll \alpha^{-\frac{2(n+1)}{n-1}} \la \bigl(\log\log\la\bigr)^{-\frac2{n-1}},
\quad
\text{if } \, \, \, \, \alpha\le \la^{\frac{n-1}4}(\log\la)^{\delta_n}.
\end{equation} 
Thus, by \eqref{3.6}, we would obtain \eqref{3.2} if we could show that for $\la\gg 1$
\begin{multline}\label{3.8}
\bigl|\bigl\{x\in \Omega: \, |\rho(\log\la(\la-P))f(x)|>\alpha\bigr\}\bigr|
\le C \alpha^{-\frac{2(n+1)}{n-1}} \la (\log\log\la)^{-\frac2{n-1}}, 
\\
\text{if } \, \, \alpha\ge \la^{\frac{n-1}4}(\log\la)^{\delta_n}.
\end{multline}

As we mentioned in the introduction, this step is key for us since it has allowed us to use our curvature assumptions
and move  well past the dangerous heights where $\alpha$ is comparable to $\la^{\frac{n-1}4}$.

At this stage, due to the nature of the pointwise estimates in Lemma~\ref{lemma3.3}, we need to change the
frequency scale we are working with.  Instead of effectively working with $(\log\la)^{-1}$ windows for
frequencies as above, we shall work with wider windows of size $T^{-1}$ where $T=c_0\log\log\la$, with $c_0$
chosen later to deal with the second term in the right side of \eqref{3.5}.

We claim that we would have \eqref{3.8}, and therefore be done, if we could show that
\begin{multline}\label{3.9}
\bigl|\bigl\{x\in \Omega: \, |\rho\bigl(c_0\log\log\la(\la-P)\bigr)h(x)|>\alpha\bigr\}\bigr| \lesssim \alpha^{-\frac{2(n+1)}{n-1}}\la (\log\log\la)^{-\frac1{n+1}},
\\
\text{if } \, \, \, \alpha\ge \la^{\frac{n-1}4}(\log\la)^{\delta_n}, \, \, \, \text{and } \, \, \|h\|_{L^2(M)}\le 1.
\end{multline}

To verify this claim, we note that since $\rho(0)=1$ and $\rho\in {\mathcal S}$, for $\tau\in \R$ and for $\la\gg 1$ have
$$\bigl| \bigl[\rho(c_0\log\log\la(\la-\tau))-1\bigr]\rho(\log\la(\la-\tau))\bigr|
\lesssim \frac{\log\log\la}{\log\la} (1+|\la-\tau|)^{-N},
$$
for any $N=1,2,\dots$.  Thus, by using the fact that by \cite{Sef} the unit band spectral projection operators $\chi_\la$ satisfy
$$\|\chi_\la\|_{L^2(M)\to L^{\frac{2(n+1)}{n-1}}(M)}=O(\la^{\frac{n-1}{2(n+1)}}),$$
we deduce that 
$$\bigl\|\bigl[\rho(c_0\log\log\la(\la-P))-I\bigr]\circ \rho(\log\la(\la-P))f\|_{L^{\frac{2(n+1)}{n-1}}(M)}
\lesssim \frac{\log\log\la}{\log\la}\la^{\frac{n-1}{2(n+1)}},
$$
and so, by Chebyshev, for all $\alpha>0$ we have
\begin{multline*}
\bigl|\bigl\{x\in M: \, |[\rho(c_0\log\log\la(\la-P) )-I] \circ \rho(\log\la(\la-P))f(x)|>\alpha\bigr\}\bigr|
\\
\lesssim \bigl(\tfrac{\log\log\la}{\log\la}\bigr)^{\frac{2(n+1)}{n-1}}\la \alpha^{-\frac{2(n+1)}{n-1}},
\end{multline*}
which is much better than the bounds posited in \eqref{3.8}.  If we take $h=\rho(\log\la(\la-P))f$ in \eqref{3.9}, we deduce the claim from
this since, by \eqref{2.1}, $\|\rho(\log\la(\la-P))\|_{L^2(M)\to L^2(M)}\le1$.

Following the arguments from the preceding section, to prove \eqref{3.9}, let
$$A=\{x\in \Omega: \, |\rho(c_0\log\log\la(\la-P))h(x)|>\alpha\},$$
and let $\psi_\la$ be defined as in \eqref{2.13} but with $\rho(\la-P)$ replaced by
$\rho(c_0\log\log(\la-P))$.  Note that for large $\la$ 
$$A=\emptyset \quad \text{if } \quad
%\alpha\ge \la^{\frac{n-1}2}(\log\log\la)^{-\frac12},
\quad \la^{\frac{n-1}2}(\log \log \la)^{-\frac12}\lesssim \alpha,
$$
since estimates of B\'erard~\cite{Berard} (see also \cite{Hang}) give
$$\|\rho(c_0\log\log\la(\la-P))\|_{L^2(M)\to L^\infty(M)}
\lesssim \la^{\frac{n-1}2}/\bigl(\log\log\la\bigr)^{\frac12}.
$$
This will allow us to apply \eqref{2.5}.

Next, as in the proof of Proposition~\ref{prop2.1}, we write $A=\cup A_j$ where
$A_j=Q_j\cap A$, with the $Q_j$ coming from a lattice of nonoverlapping cubes in
our coordinate system, except now, instead of \eqref{2.1}, we take
\begin{equation}\label{3.10}
r=\la \alpha^{-\frac4{n-1}}(\log\log\la)^{-\frac2{n-1}}.
\end{equation}
As before, at the expense of replacing $A$ by a set of proportional measure, we may assume that
\begin{equation}\label{3.11}
\text{dist }(A_j,A_k)>C_0r, \quad j\ne k,
\end{equation}
where $C_0$ will be specified momentarily. 

Let us now collect the two estimates that we need for the proof of \eqref{3.9}.  First, if
$S_\la =\eta(c_0\log\log\la(\la-P))$, $\eta=\rho^2$, then by \eqref{3.5} if $c_0>0$ is fixed small
enough, we have that its kernel, $K_\la$, satisfies
\begin{equation}\label{3.12}
|K_\la(w,z)|\le C\Bigl[
(\log\log\la)^{-1}\Bigl(\frac\la{d_g(w,z)}\Bigr)^{\frac{n-1}2}+\la^{\frac{n-1}2}(\log\la)^{\frac{\delta_n}{10}}\Bigr],
\end{equation}
with $C$ independent of $\la\gg 1$.  

The other estimate that we require is that there is a uniform constant so that, for $T\ge 1$, we have
\begin{equation}\label{3.13}
\bigl\|\rho(T(\la-P))f\|_{L^2(B(x,r))}\le Cr^{\frac12}\|f\|_{L^2(M)}, \quad \text{if } \, \, 
\la^{-1}\le r\le \text{Inj }M,
\end{equation}
with $C$ independent of $\la\gg 1$.
Since
$$\rho(T(\la-P))=\frac1{2\pi T}\int \Hat \rho(t/T) e^{it\la}e^{-itP}\, dt,$$
and, by \eqref{2.1}, $\Hat \rho(t/T)=0$ if $|t|\ge T$, this follows easily from \eqref{2.5} and the
fact that the half-wave operators $e^{-itP}$ are unitary.

We now use the proof of Proposition~\ref{prop2.1} to obtain \eqref{3.9}.  We argue as before to see that
if $T_\la =\rho(c_0\log\log\la(\la-P))$ and $a_j=\psi_\la\times \1_{A_j}$, then since $\|h\|_{L^2(M)}\le1$,
we have
$$\alpha^2|A|^2\le \sum_j \int |T_\la a_j|^2 \, dV_g
+\sum_{j\ne k}\int S_\la a_j \, \overline{a_k}\, dV_g = I + II.
$$
By the dual version of \eqref{3.13} and \eqref{3.10}
$$I\lesssim r\sum_j\int |a_j|^2 \, dV_g=r|A|=\la(\log\log\la)^{-\frac2{n-1}}\alpha^{-\frac4{n-1}}|A|.$$
By \eqref{3.12}
\begin{align*}
II&\lesssim \Bigl[(\log\log\la)^{-1} \la^{\frac{n-1}2}\bigl(C_0r\bigr)^{-\frac{n-1}2}+\la^{\frac{n-1}2}(\log\la)^{\frac{\delta_n}{10}}\Bigr] \,
\sum_{j\ne k}\|a_j\|_{L^1}\|a_k\|_{L^1}
\\
&\le C_0^{-\frac{n-1}2}\alpha^2|A|^2+\la^{\frac{n-1}2}(\log\la)^{\frac{\delta_n}{10}}|A|^2.
\end{align*}
Since we are assuming that $\alpha\ge \la^{\frac{n-1}4}(\log\la)^{\delta_n}$, the last term is $\ll \alpha^2|A|^2$ if $\la$ is large.
This means that we can fix $C_0$ in \eqref{3.11} so that for large $\la$ we have
$$II\le \frac12\alpha^2|A|^2.$$
Hence
$$\alpha^2|A|^2\le C\la(\log\log\la)^{-\frac2{n-1}}\alpha^{-\frac4{n-1}}|A|+\frac12\alpha^2|A|^2,$$
which of course yields the desired estimate
$$|A|\lesssim \la(\log\log\la)^{-\frac2{n-1}}\alpha^{-2-\frac4{n-1}}=
\la (\log\log\la)^{-\frac2{n-1}}\alpha^{-\frac{2(n+1)}{n-1}},$$
assuming, as we are, that $\alpha \ge \la^{\frac{n-1}4}(\log\la)^{\delta_n}.$

This concludes the proof of \eqref{3.9}, Proposition~\ref{prop3.1} and Proposition~\ref{prop1.2}.

\newsection{Proof of Theorem~\ref{thm1.1}}

Even though \eqref{main2}, and hence Theorem~\ref{thm1.1}, follows directly from interpolating between
the weak-type estimate \eqref{main2'}  and the estimate,
\begin{equation}\label{i.1}
\|\chi_{[\la,\la+1]}\|_{L^2(M)\to L^{p_c,2}(M)}=O(\la^{\frac1{p_c}}), \quad p_c=\tfrac{2(n+1)}{n-1},
\end{equation}
of Bak and Seeger~\cite{BakSeeg}, for the sake of completeness, we shall give the simple argument here.

Let us start by recalling some basic facts about Lorentz spaces. See \S3 in Chapter 5 of Stein and Weiss~\cite{SteinWeiss}
for more details.

First, given a function $u$ on $M$, we let
$$\omega(\alpha)=\bigl|\bigl\{x\in M: \, |u(x)|>\alpha\bigr\}\bigr|, \quad \alpha>0,$$
denote its distribution function, and
$$u^*(t)=\inf \{\alpha: \, \omega(\alpha)\le t\}, \quad t>0,$$
the nonincreasing rearrangement of $u$.

Then the Lorentz spaces $L^{p,q}(M)$ for $1\le p<\infty$ and $1\le q<\infty$ are defined as
all $u$ so that
\begin{equation}\label{i.2}
\|u\|_{L^{p,q}(M)}=\left( \, \frac{q}{p} \int_0^\infty \bigl[ \,  t^{\frac1p}u^*(t) \, \bigr]^q \, \frac{dt}t \, \right)^{\frac1q}<\infty.
\end{equation}
By equation (3.9) in Chapter 5 of \cite{SteinWeiss}, we then have
\begin{equation}\label{i.3}
\|u\|_{L^{p,p}(M)}=\|u\|_{L^p(M)},
\end{equation}
and by Lemma 3.8 there we also have
\begin{align*}
\sup_{t>0} t^{\frac1p}u^*(t)&=\sup_{\alpha>0} \alpha \bigl[ \,  \omega(\alpha) \, \bigr]^{\frac1p}
\\
&=\sup_{\alpha>0} \alpha \,  \bigl|\bigl\{ x\in M: \, |u(x)|>\alpha \bigr\}\bigr|^{\frac1p}. \end{align*}

If we take $u=\chi_{[\la,\la+(\log\la)^{-1}]}f$ and assume from now on that $\|f\|_{L^2(M)}=1$, we therefore
have, by our improved weak-type estimates \eqref{main2'},
\begin{equation}\label{i.4}
    \sup_{t>0} t^{\frac1{p_c}} u^*(t)\le C
\la^{\frac1{p_c}}\bigl(\log \log \la\bigr)^{-\frac1{n+1}}.
\end{equation}
Also, for this $u$ we have $\chi_{[\la,\la+1]}u=u$, and so, by \eqref{i.1},
\begin{equation}\label{i.5}
\|u\|_{L^{p_c,2}(M)}\le C\la^{\frac1{p_c}}\|u\|_{L^2(M)} \le C\la^{\frac1{p_c}}\|f\|_{L^2(M)}
= C\la^{\frac1{p_c}}.
\end{equation}

By \eqref{i.2}--\eqref{i.3} and \eqref{i.4}--\eqref{i.5}, we therefore get
\begin{align*}
    \|u\|_{L^{p_c}(M)}&=\left( \, \int_0^\infty \bigl[ \,  t^{\frac1{p_c}} u^*(t) \, \bigr]^{p_c} \, \frac{dt}t\, \right)^{\frac1{p_c}}
\\
&\le (p_c/2)^{\frac1{p_c}} \, \left( \, \sup_{t>0} t^{\frac1{p_c}} u^*(t)\right)^{\frac{p_c-2}{p_c}} \,
\left(\frac2{p_c} \int_0^\infty \bigl[ \,  t^{\frac1{p_c}}u^*(t) \, \bigr]^{2} \, \frac{dt}t\, \right)^{\frac1{p_c}}
\\
&\lesssim \bigl[\, \la^{\frac1{p_c}} \bigl(\log \log \la\bigr)^{-\frac1{n+1}} \, \bigr]^{\frac{p_c-2}{p_c}} \, \|u\|_{L^{p_c,2}(M)}^{\frac2{p_c}}
\\
&\lesssim \bigl[\, \la^{\frac1{p_c}} \bigl(\log \log \la\bigr)^{-\frac1{n+1}} \, \bigr]^{\frac{p_c-2}{p_c}} \, \left( \, \la^{\frac1{p_c}}\, \right)^{\frac2{p_c}}
\\
&=\la^{\frac1{p_c}} \, \bigl(\log\log \la\bigr)^{-\frac2{(n+1)^2}},
\end{align*}
as $(p_c-2)/(n+1)p_c=2/(n+1)^2$.  Since $u=\chi_{[\la,\la+(\log \la)^{-1}]} f$ and we are assuming that $\|f\|_{L^2(M)}=1$, 
we conclude that \eqref{main2} must be valid, which completes the proof of Theorem~\ref{thm1.1}. \qed

\newsection{Concluding remarks}

First of all, we were only able to get endpoint results with gains of powers of $\log\log\la$ instead of 
powers of $\log\la$ due to the estimate \eqref{3.5} for the smoothed out spectral projection kernels.
Ideally, one would want to be able to use a variant of \eqref{3.5} where the exponential factor
is not present for the second term in the right. 
 Lower bounds of Jakobson and Polterovich~\cite{JP0}--\cite{JP}
show that this error term cannot be $O(\la^{\frac{n-1}2})$, but their bounds do not rule out some
improvement over \eqref{3.5}, which would lead to more favorable  estimates.  
%The presence of this
%term also accounts for the reason that we do not seem to be able to use arguments of Bak and 
%Seeger~\cite{BakSeeg} to prove strong-type endpoint bounds using the weak-type ones.t

A better avenue for improvement, though, might be to try to improve the ball-localized estimates
\eqref{2.5},
where the operators $\Hat a (P-\la)$ are replaced by $\rho(T(\la-P)))$ for appropriate
$T=T(r)$.  A seemingly modest improvement where $r^{\frac12}$ is replaced by
$r^{\frac12}/(\log \la)^\e$, for some $\e>0$, if $\la^{-1} \le r\le (\log \la)^{-\delta}$, for some $\delta>0$
could be of use.
%, plugged into the above proof would give weak-type estimates involving logarithmic gains.  
The author in \cite{SCon} obtained
such improvements with $\e=\tfrac12$ if $\la^{-1}\le r\ll \la^{-\frac12}$, but this does not seem very useful.
On the other hand, assuming that the curvature is strictly negative, Han~\cite{Han} and Hezari
and Rivi\`ere~\cite{HezR} obtained these types of bounds with $\e=n/2$ and $\delta$ depending on the
dimension for a density one sequence of eigenfunctions.  For toral eigenfunctions, Lester and Rudnick~\cite{LRud}
did even better for a density one sequence of eigenfunctions by showing, for instance,  that 
in when $n=2$ one can replace
$r^{\frac12}$ in \eqref{2.5} by $r^{\frac{n}2}$ all the way down to $r$ being equal to the essentially the
wavelength, i.e., $\la^{-1+o(1)}$ as $\la \to \infty$.  (See also \cite{HezR2} for earlier work.)

Finally, the arguments we have given could possibly prove new sharp bounds for eigenfunctions on manifolds
with boundary.  Sharp estimates in the two-dimensional case were obtained by Smith and the author~\cite{SSActa},
but sharp estimates in higher dimensions are only known for certain exponents.  It turns out that the critical
exponent for manifolds with boundary should be $\frac{6n+4}{3n-4}$, which is larger than the one for
the boundaryless case, $\frac{2(n+1)}{n-1}$.  

If one could obtain the analog of \eqref{2.6} in this setting with the right hand side replaced by
$$(\la/\text{dist }(x,y))^{\frac{n-1}2+\frac16},$$
then one would likely be able to obtain sharp weak-type estimates for $p=\frac{6n+4}{3n-4}$, which by interpolation
would yield sharp $L^p$ estimates for all other $p\in (2,\infty]$.  
%One would only need these pointwise
%estimates for the kernels near the diagonal.  
One would also need analogs of \eqref{2.5}, but these are probably
much easier and likely follow from stretching arguments of Ivri{\u\i}~\cite{Iv} and Seeley~\cite{Seeley}.
In the model case involving the Friedlander model, recently Ivanovici, Lebeau and Planchon~\cite{ILP} obtained
dispersive estimates for wave equations which have similarities with the types of spectral projection kernel
estimates we just described.

\section*{Acknowledgements}
We are  grateful for  helpful suggestions and comments from our colleagues M. Blair, H. Hezari, A. Seeger and S. Zeldtich.

\bibliography{EF}{}

\def\cprime{$'$} \def\cprime{$'$}
\providecommand{\bysame}{\leavevmode\hbox to3em{\hrulefill}\thinspace}
\providecommand{\MR}{\relax\ifhmode\unskip\space\fi MR }
% \MRhref is called by the amsart/book/proc definition of \MR.
\providecommand{\MRhref}[2]{%
  \href{http://www.ams.org/mathscinet-getitem?mr=#1}{#2}
}
\providecommand{\href}[2]{#2}
\begin{thebibliography}{10}

\bibitem{Av}
V.~G. Avakumovi{\'c}, \emph{\"{U}ber die {E}igenfunktionen auf geschlossenen
  {R}iemannschen {M}annigfaltigkeiten}, Math. Z. \textbf{65} (1956), 327--344.

\bibitem{BakSeeg}
J.-G. Bak and A.~Seeger, \emph{Extensions of the {S}tein-{T}omas theorem},
  Math. Res. Lett. \textbf{18} (2011), no.~4, 767--781.

\bibitem{Berard}
P.~H. B{\'e}rard, \emph{On the wave equation on a compact {R}iemannian manifold
  without conjugate points}, Math. Z. \textbf{155} (1977), no.~3, 249--276.

\bibitem{BSTop}
M.~D. Blair and C.~D. Sogge, \emph{Concerning {T}oponogov's theorem and
  logarithmic improvement of estimates of eigenfunctions},  (2015),
  arXiv:1510.07726.

\bibitem{BSJ}
\bysame, \emph{On {K}akeya--{N}ikodym averages, {$L\sp p$}-norms and lower
  bounds for nodal sets of eigenfunctions in higher dimensions}, J. Eur. Math.
  Soc. (JEMS) \textbf{17} (2015), no.~10, 2513--2543.

\bibitem{BSK15}
\bysame, \emph{Refined and microlocal {K}akeya-{N}ikodym bounds of
  eigenfunctions in higher dimensions},  (2015), arXiv:1510.07724.

\bibitem{BKak}
J.~Bourgain, \emph{Besicovitch type maximal operators and applications to
  {F}ourier analysis}, Geom. Funct. Anal. \textbf{1} (1991), no.~2, 147--187.

\bibitem{Bourgainef}
\bysame, \emph{Geodesic restrictions and {$L\sp p$}-estimates for
  eigenfunctions of {R}iemannian surfaces}, Linear and complex analysis, Amer.
  Math. Soc. Transl. Ser. 2, vol. 226, Amer. Math. Soc., Providence, RI, 2009,
  pp.~27--35.

\bibitem{BGT}
N.~Burq, P.~G{\'e}rard, and N.~Tzvetkov, \emph{Restrictions of the
  {L}aplace-{B}eltrami eigenfunctions to submanifolds}, Duke Math. J.
  \textbf{138} (2007), no.~3, 445--486.

\bibitem{ChenS}
X.~Chen and C.~D. Sogge, \emph{A few endpoint geodesic restriction estimates
  for eigenfunctions}, Comm. Math. Phys. \textbf{329} (2014), no.~2, 435--459.

\bibitem{CM}
T.~H. Colding and W.~P. Minicozzi, II, \emph{Lower bounds for nodal sets of
  eigenfunctions}, Comm. Math. Phys. \textbf{306} (2011), no.~3, 777--784.

\bibitem{CdVQE}
Y.~Colin~de Verdi{\`e}re, \emph{Ergodicit\'e et fonctions propres du
  laplacien}, Comm. Math. Phys. \textbf{102} (1985), no.~3, 497--502.

\bibitem{Han}
X.~Han, \emph{Small scale quantum ergodicity in negatively curved manifolds},
  (2015), arXiv:1410.3911.

\bibitem{HassellTacy}
A.~Hassell and M.~Tacy, \emph{Improvement of eigenfunction estimates on
  manifolds of nonpositive curvature}, Forum Math. \textbf{27} (2015), no.~3,
  1435--1451.

\bibitem{HezR}
H.~Hezari and G.~Rivi\`ere, \emph{{$L^p$} norms, nodal sets, and quantum
  ergodicity}, Adv. Math. (2016), to appear.

\bibitem{HezR2}
\bysame, \emph{Quantitative equidistribution properties of toral
  eigenfunctions}, J. Spec. Theory (2016), to appear.

\bibitem{HezS}
H.~Hezari and C.~D. Sogge, \emph{A natural lower bound for the size of nodal
  sets}, Anal. PDE \textbf{5} (2012), no.~5, 1133--1137.

\bibitem{ILP}
O.~Ivanovici, G.~Lebeau, and F.~Planchon, \emph{Dispersion for the wave
  equation inside strictly convex domains {I}: the {F}riedlander model case},
  Ann. of Math. (2) \textbf{180} (2014), no.~1, 323--380.

\bibitem{Iv}
V.~Ja. Ivri{\u\i}, \emph{The second term of the spectral asymptotics for a
  {L}aplace-{B}eltrami operator on manifolds with boundary}, Funktsional. Anal.
  i Prilozhen. \textbf{14} (1980), no.~2, 25--34.

\bibitem{JP0}
D.~Jakobson and I.~Polterovich, \emph{Lower bounds for the spectral function
  and for the remainder in local {W}eyl's law on manifolds}, Electron. Res.
  Announc. Amer. Math. Soc. \textbf{11} (2005), 71--77.

\bibitem{JP}
\bysame, \emph{Estimates from below for the spectral function and for the
  remainder in local {W}eyl's law}, Geom. Funct. Anal. \textbf{17} (2007),
  no.~3, 806--838.

\bibitem{LRud}
S.~Lester and Z.~Rudnick, \emph{Small scale equidistribution of eigenfunctions
  on the torus},  (2015), arXiv:1508.01074.

\bibitem{Lev}
B.~M. Levitan, \emph{On the asymptotic behavior of the spectral function of a
  self-adjoint differential equation of the second order}, Izvestiya Akad. Nauk
  SSSR. Ser. Mat. \textbf{16} (1952), 325--352.

\bibitem{Seeley}
R.~Seeley, \emph{An estimate near the boundary for the spectral function of the
  {L}aplace operator}, Amer. J. Math. \textbf{102} (1980), no.~5, 869--902.

\bibitem{SSActa}
H.~F. Smith and C.~D. Sogge, \emph{On the {$L\sp p$} norm of spectral clusters
  for compact manifolds with boundary}, Acta Math. \textbf{198} (2007), no.~1,
  107--153.

\bibitem{SnQE}
A.~I. {\v{S}}nirel{\cprime}man, \emph{Ergodic properties of eigenfunctions},
  Uspehi Mat. Nauk \textbf{29} (1974), no.~6(180), 181--182.

\bibitem{Sthesis}
C.~D. Sogge, \emph{Oscillatory integrals and spherical harmonics}, Duke Math.
  J. \textbf{53} (1986), no.~1, 43--65.

\bibitem{Sef}
\bysame, \emph{Concerning the {$L\sp p$} norm of spectral clusters for
  second-order elliptic operators on compact manifolds}, J. Funct. Anal.
  \textbf{77} (1988), no.~1, 123--138.

\bibitem{SFIO}
\bysame, \emph{Fourier integrals in classical analysis}, Cambridge Tracts in
  Mathematics, vol. 105, Cambridge University Press, Cambridge, 1993.

\bibitem{SKN}
\bysame, \emph{Kakeya-{N}ikodym averages and {$L\sp p$}-norms of
  eigenfunctions}, Tohoku Math. J. (2) \textbf{63} (2011), no.~4, 519--538.

\bibitem{Hang}
\bysame, \emph{Hangzhou lectures on eigenfunctions of the {L}aplacian}, Annals
  of Mathematics Studies, vol. 188, Princeton University Press, Princeton, NJ,
  2014.

\bibitem{SCon}
\bysame, \emph{Problems related to the concentration of eigenfunctions},
  (2015), arXiv:1510.07723, Journe{e}s EDP, to appear.

\bibitem{SHR}
\bysame, \emph{Localized {$L^p$}-estimates of eigenfunctions: A note on an
  article of {Hezari and Rivi\`ere}}, Adv. Math. \textbf{289} (2016),
  384–--396.

\bibitem{STZ}
C.~D. Sogge, J.~A. Toth, and S.~Zelditch, \emph{About the blowup of quasimodes
  on {R}iemannian manifolds}, J. Geom. Anal. \textbf{21} (2011), no.~1,
  150--173.

\bibitem{SZDuke}
C.~D. Sogge and S.~Zelditch, \emph{Riemannian manifolds with maximal
  eigenfunction growth}, Duke Math. J. \textbf{114} (2002), no.~3, 387--437.

\bibitem{SZNod1}
\bysame, \emph{Lower bounds on the {H}ausdorff measure of nodal sets}, Math.
  Res. Lett. \textbf{18} (2011), no.~1, 25--37.

\bibitem{SZNod2}
\bysame, \emph{Lower bounds on the {H}ausdorff measure of nodal sets {II}},
  Math. Res. Lett. \textbf{19} (2012), no.~6, 1361--1364.

\bibitem{SZStein}
\bysame, \emph{On eigenfunction restriction estimates and {$L\sp 4$}-bounds for
  compact surfaces with nonpositive curvature}, Advances in analysis: the
  legacy of {E}lias {M}. {S}tein, Princeton Math. Ser., vol.~50, Princeton
  Univ. Press, Princeton, NJ, 2014, pp.~447--461.

\bibitem{SZRev}
\bysame, \emph{Focal points and sup-norms of eigenfunctions}, Rev. Mat.
  Iberomericana (2015), to appear.

\bibitem{SZRev2}
\bysame, \emph{Focal points and sup-norms of eigenfunctions {II}: the
  two-dimensional case}, Rev. Mat. Iberomericana (2015), to appear.

\bibitem{SteinWeiss}
E.~M. Stein and G.~Weiss, \emph{Introduction to {F}ourier analysis on
  {E}uclidean spaces}, Princeton University Press, Princeton, N.J., 1971,
  Princeton Mathematical Series, No. 32.

\bibitem{Tomas}
P.~A. Tomas, \emph{A restriction theorem for the {F}ourier transform}, Bull.
  Amer. Math. Soc. \textbf{81} (1975), 477--478.

\bibitem{Tomas2}
\bysame, \emph{Restriction theorems for the {F}ourier transform}, Harmonic
  analysis in {E}uclidean spaces ({P}roc. {S}ympos. {P}ure {M}ath., {W}illiams
  {C}oll., {W}illiamstown, {M}ass., 1978), {P}art 1, Proc. Sympos. Pure Math.,
  XXXV, Part, Amer. Math. Soc., Providence, R.I., 1979, pp.~111--114.

\bibitem{ZQE}
S.~Zelditch, \emph{Uniform distribution of eigenfunctions on compact hyperbolic
  surfaces}, Duke Math. J. \textbf{55} (1987), no.~4, 919--941.

\bibitem{Zrate}
\bysame, \emph{On the rate of quantum ergodicity. {I}. {U}pper bounds}, Comm.
  Math. Phys. \textbf{160} (1994), no.~1, 81--92.

\end{thebibliography}
\bibliographystyle{amsplain}
\end{document}